\definecolor{dkgreen}{rgb}{0,0.6,0}
\definecolor{gray}{rgb}{0.5,0.5,0.5}
\definecolor{mauve}{rgb}{0.58,0,0.82}
\tiny\color{gray},
\lstdefinelanguage{none}{
	identifierstyle=
}
\numberwithin{equation}{section}
\def\namedlabel#1#2{\begingroup
	#2%
	\def\@currentlabel{#2}%
	\phantomsection\label{#1}\endgroup
}
\definecolor{light-gray}{gray}{0.95}
\def\centerarc[#1](#2)(#3:#4:#5){\draw[#1] ($(#2)+({#5*cos(#3)},{#5*sin(#3)})$) arc (#3:#4:#5);}
\author[Arroyave, J.C.]{Juan Carlos Arroyave}
\address{IMPA\\
	Estrada Dona Castorina, 110\\
	Jardim Botânico\\
	CEP 22460-320\\
	Rio de Janeiro, Brazil}
\curraddr{}
\email{juan.arroyaveb@gmail.com}
\thanks{}
\author[Barros, E.]{Eldon Barros}
\address{IMPA\\
	Estrada Dona Castorina, 110\\
	Jardim Botânico\\
	CEP 22460-320\\
	Rio de Janeiro, Brazil}
\curraddr{}
\email{eldon.barros@impa.br}
\thanks{}
\author[Pimenta, E.]{Eduardo Pimenta}
\address{UFBA\\
	Instituto de Matem\'atica e estat\'istica,\\ 
	Campus de Ondina, Av. Milton Santos, S/N.\\ 
	CEP 40170-110\\
	Salvador, Brazil}
\curraddr{}
\email{sampaieduardo@gmail.com}
\thanks{}
\newcommand{\msf}[1]{{\mathsf #1}}
\newcommand{\mc}[1]{{\mathcal #1}}
\newcommand{\mf}[1]{{\mathfrak #1}}
\newcommand{\bb}[1]{{\mathbb #1}}
\newcommand{\eps}{\varepsilon}
\newcommand{\one}{\mathbbm{1}}
\newcommand{\Prob}[2]{\mathbb{P}_{#1}\left(#2\right)}
\newcommand{\Expec}[2]{\mathbb{E}_{#2}\left[#1\right]}
\newcommand{\CExpec}[3]{\mathbb{E}_{#2}\left[#1 \middle| #3\right]}
\newcommand{\gn}[3]{{#1}_{#3}\left(#2\right)}
\newcommand{\pfrac}[2]{\genfrac{}{}{}{1}{#1}{#2}}
\newcommand{\dd}{{\bf d}}
\newcommand{\f}[1]{f\left(#1\right)}
\newcommand{\g}[1]{g\left(#1\right)}
\newcommand{\of}[1]{\text{o}\left(\pfrac{1}{#1}\right)}
\let\oldtocsection=\tocsection
\let\oldtocsubsection=\tocsubsection
\let\oldtocsubsubsection=\tocsubsubsection
\renewcommand{\tocsection}[2]{\hspace{0em}\oldtocsection{#1}{#2}}
\renewcommand{\tocsubsection}[2]{\hspace{1em}\oldtocsubsection{#1}{#2}}
\renewcommand{\tocsubsubsection}[2]{\hspace{2em}\oldtocsubsubsection{#1}{#2}}
\DeclareRobustCommand{\SkipTocEntry}[5]{}
\newtheorem{theorem}{Theorem}[section]
\newtheorem{lemma}[theorem]{Lemma}
\newtheorem{corollary}[theorem]{Corollary}
\newtheorem{remark}[theorem]{Remark}
\newtheorem{definition}[theorem]{Definition}
\keywords{Boundary random walk, Mixed Brownian motion, Scaling limits, Martingale problem, Skorohod topology, Diffusion processes}
\begin{document}
\title{Scaling limit of boundary random walks: A martingale problem approach}
\subjclass[2020]{60F05, 60J60, 60G50, 60J65, 60J55}

\begin{abstract}
We establish the scaling limit of a class of boundary random walks to the full spectrum of Brownian-type processes on the half-line. By solving the associated martingale problem and employing weak convergence techniques, we prove that under appropriate scaling, the process converges to the general Brownian motion in the $J_1$-Skorokhod topology. The main novelty of our approach lies in  a result on the asymptotic behavior of the local time of the boundary random walks, allowing us to derive a CLT result for several Brownian-type limit processes on the half-line.
\end{abstract}

\maketitle
\allowdisplaybreaks

\section{introduction}
Random walks form a fundamental class of stochastic processes, with applications in areas such as statistical physics, potential theory, and mathematical finance. Understanding the behavior of such processes provides insight into the dynamics of more complex systems. A notable example is the invariance principle, also known as Donsker's theorem, a foundational result that establishes a connection between discrete-time random walks and Brownian motion.

Diffusion processes with boundary conditions were first systematically studied by Feller (see \cite{Feller1952, Feller1954}), who introduced a classification of boundary behaviors and characterized all one-dimensional diffusion processes via the domain of their infinitesimal generators. Later, in~\cite{ItoMcKean1963}, Itô and McKean provided a probabilistic formulation of Feller's classification, describing all strong Markov processes on the half-line with continuous paths in terms of their behavior at the origin.

Building upon this foundation, Knight~\cite{Knight1981} introduced a unified framework for Brownian motions on the half-line, characterized by three non-negative parameters $c_1, c_2, c_3$, encompassing a rich family of diffusions capturing various types of interactions at the origin. Such coefficients represent the intensities of absorption, reflection, and killing at the origin, respectively. In particular, the {\it mixed Brownian motion} arises when all three coefficients are strictly positive, thereby capturing the full spectrum of boundary interactions in a single model. 

Recent studies have further explored the structure and applications of these processes. For instance, Li~\cite{Li2024} demonstrated that any Feller Brownian motion can be transformed into a specific birth-death process through a unique time change transformation, and conversely, that any such birth-death process can be derived from a Feller Brownian motion via time change. This connection deepens the understanding of the interplay between discrete and continuous stochastic systems.

In parallel, significant efforts have been devoted to establishing rigorous scaling limits for discrete systems that approximate these diffusions. Erhard et al.~\cite{Erhard2024} proved a functional central limit theorem for general Brownian motions on the half-line. Their result is a Trotter--Kato type result which characterizes convergence via the generators of the approximating processes. In addition, they provide quantitative {\it Berry-Esseen} estimates in the vague topology at fixed times. A key aspect of their contribution is the identification of a complete phase diagram for the possible limiting processes—such as reflected, absorbed, sticky, elastic, and mixed Brownian motions—according to the scaling of the transition rates at the origin. 

In this present work, we offer an alternative and fully probabilistic approach to this problem by directly constructing Brownian-type processes as the scaling limit of a family of boundary random walks. Using a martingale problem formulation and weak convergence techniques, we establish the convergence of these discrete models to the Brownian motions on the half-line in the $J_1$-Skorokhod topology.

A central and novel feature of our method is the detailed asymptotic analysis of the local time at the boundary for the discrete random walks. Unlike previous approaches, which treat boundary behavior implicitly or through the generator, we make the local time a primary object of study. This discrete local time quantifies how much time the walk spends near the origin and serves as a microscopic proxy for boundary interaction.

We derive precise asymptotic estimates for the discrete local time, which allow us to identify its limiting contribution in the martingale formulation. These estimates play a key role in the characterization of limit points, enabling us to capture the correct boundary behavior in the continuum limit. Moreover, they provide a robust and transparent way to encode the effect of boundary transition rates, clarifying how different scalings lead to absorbed, sticky, elastic, or mixed Brownian motions.

Our construction thus not only recovers the full class of Feller diffusions on the half-line, but also highlights the crucial role of local time in mediating the transition from discrete to continuous boundary phenomena.

The outline of the paper is as follows: in Section~\ref{sec:2}, we state the main results and present the model. The Section~\ref{sec:5} contains the asymptotic characterization of the local times for boundary random walks. The tightness of the boundary random walk is proved in Section~\ref{sec:3}, and finally, the characterization of the limit points is given in Section~\ref{sec:4}.

\section{The framework}\label{sec:2}
\subsection{Notation}
This section is devoted to the definition of the model as well as the statement of the main results. By convention, we use that the set of natural numbers starting at zero and denote by $\bb{N}_0 := \{0, 1, 2, \ldots\}$. In addition, given any functions $f,g$ by $f \lesssim g$, or $f(N) \in O(g(N))$ we mean that there exists a positive constant $C$ such that $f(N) \leq Cg(N)$ for all $N \in \bb{N}$. Finally, by  $g(N) \in o(f(N))$ we mean that $\lim_{N \to \infty} g(N)/f(N) = 0$.

Let $(\bb{G},d)$ be a separable locally compact (but possibly not complete) metric space. We denote by $\bar{\bb{G}}$ the metric completion of $\bb{G}$ and define its boundary by $\partial\bb{G} := \bar{\bb{G}}\setminus\mathring{\bb{G}}$, where by $\mathring{\bb{G}}$ we mean the interior of the set $\bb{G}$. Additionally, we adjoin an isolated point $\Delta \notin \bb{G}$, referred to as the {\it cemetery state}, which satisfies that $d(\Delta, x) \geq 1$ for all $x \in \mathring{\bb{G}}$. 

\subsection{Discrete model}
We now define a family of discrete-time Markov chains which, under diffusive scaling, approximate a class of one-dimensional Brownian-type processes on the half-line with nontrivial boundary behavior.

Such processes are characterized by their transition rates at the origin, which are governed by four non-negative parameters: $\alpha, \beta, A, B$. The parameter $\alpha$ controls the scaling of the jump rate from the origin to the cemetery state, while $\beta$ governs the jump rate from the origin to the bulk. The parameters $A$ and $B$ represent the intensities of these transitions.

\begin{figure}[!htb]
    \centering
    \begin{tikzpicture}[scale=1]
        \draw (1,0)--(8,0);

        \draw (4.5,0.6) node[above]{$\displaystyle\pfrac{1}{2}$};
        \draw (5.5,0.6) node[above]{$\displaystyle\pfrac{1}{2}$};
        \draw (1.5,0.6) node[above]{$\displaystyle\pfrac{B}{N^{\beta}}$};
        \draw (0,0.6) node[above]{$\displaystyle\pfrac{A}{N^{\alpha}}$};
        \draw (1,-1.58) node[above]{$\displaystyle{1 - \pfrac{B}{N^\beta} - \pfrac{A}{N^{\alpha}}}$};

        \centerarc[thick,<-](1.5,0)(30:140:0.6);
        \centerarc[thick,->](0,-0.9)(55:125:1.5);
        \centerarc[thick,->](4.5,0)(40:150:0.6);
        \centerarc[thick,<-](5.5,0)(30:140:0.6);
        \draw[thick,->] (0.8,-0.3) arc (130:400:0.35);

        \draw (-1,-0.2) node[below]{$\Delta$};
        \draw (1,-0.2) node[below]{$0$};
        \draw (2,-0.2) node[below]{$1$};
        \draw (3,-0.2) node[below]{$2$};
        \draw (4,-0.2) node[below]{$3$};
        \draw (5,-0.2) node[below]{$4$};
        \draw (6,-0.2) node[below]{$5$};
        \draw (7,-0.2) node[below]{$6$};
        \draw (8,-0.2) node[below]{$7$};
        \draw (8.7,0) node[left]{...};

        \filldraw[ball color=black] (1,0) circle (.15);
        \filldraw[fill=white] (2,0) circle (.15);
        \filldraw[fill=white] (3,0) circle (.15);
        \filldraw[fill=white] (4,0) circle (.15);
        \filldraw[fill=white] (5,0) circle (.15);
        \filldraw[fill=white] (6,0) circle (.15);
        \filldraw[fill=white] (7,0) circle (.15);
        \filldraw[fill=white] (8,0) circle (.15);
        \filldraw[fill=white] (-1,0) circle (.15);
    \end{tikzpicture}
    \caption{Jump rates for the \textit{boundary random walk.}}
    \label{GF1}
\end{figure}
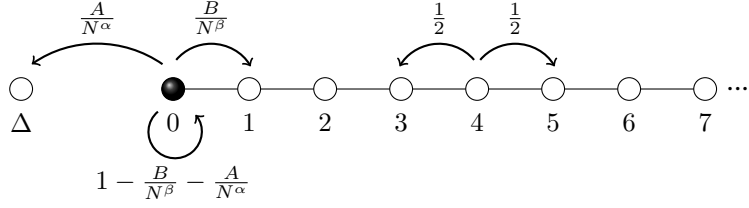

Consider $\alpha, \beta, A, B \geq 0$. For each $N\in\bb{N}$, we define the $(\alpha.\beta, A, B)$-boundary random walk $X^N := \{X^N_n(\alpha, \beta, A, B) : n \in \bb{N}_0,\, \alpha, \beta, A, B \geq 0\}$ (or simply $X^N_t$) over the state space $\tilde{\bb{G}} := \{\Delta\}\cup\bb{N}_0$ whose its discrete generator $\hat{\msf{L}}_N$ acts on local functions $f:\tilde{\bb{G}}\to\bb{R}$ via
\begin{equation}\label{eq:discretegenerator}
\hat{\msf{L}}_N\f{x}\,:=
\begin{cases}
    \frac{1}{2}\left[\f{x+1}\,+\,\f{x\,-\,1}\,-\,2\f{x}\right]\quad&\text{if } x \in \bb{N}\,,\\
    \frac{A}{N^\alpha}\left[\f{\Delta}\,-\,\f{0}\right]\,+\,\frac{B}{N^\beta}\left[\f{1}\,-\,\f{0}\right]&\text{if } x = 0\,.
\end{cases}
\end{equation}

See Figure~\ref{GF1} for a diagram illustrating the jump rates near the origin. Note that the family of process $X^N$ is absorbed upon hitting the cemetery state -- there are no transitions from $\Delta$ back to $\bb{N}_0$.

Denote by $\mc{F}_n^{X^{N}} := \sigma\{X_j^N : j \leq n\}$ the natural filtration generated by the process that encodes all information up to time $n$.

We are interested in understanding the scaling limit of the above discrete process as $N \to \infty$. By tuning the jump rates at the origin through the parameters $\alpha, \beta, A, B$ we aim to approximate a family of continuous diffusion processes on the half-line with nontrivial boundary behavior. In particular, our goal is to show that the rescaled boundary random walk $X^N$ converges, in the Skorokhod topology, to a diffusion process which evolves as standard Brownian motion in the interior of $[0,\infty)$, but exhibits a mixture of reflection, absorption, and killing at the origin.

Such diffusion processes have been fully characterized in~\cite{ItoMcKean1963}. The authors showed that all Brownian motions on the half-line with continuous paths and the strong Markov property can be described via a boundary condition involving three non-negative parameters. 

Define the rescaled spatial lattice with cemetery state by
\begin{equation*}
    \tilde{\bb{G}}_N\,:=\,\{\Delta\}\cup\frac{1}{N}\tilde{\bb{G}}
\end{equation*}

In addition, for $T > 0$, we define the rescaled boundary random walk $B^N$ over the state space $\tilde{\bb{G}}_N$,
\begin{equation*}
    B_{t}^{N}\,=\,\frac{1}{N}X_{\lfloor N^2 t \rfloor}^{N}
\end{equation*}
where $t \in [0,T]$, which corresponds to the original boundary random walk $X^N$ rescaled by a factor of $\frac{1}{N}$ in space, and by a factor of $N^2$ in time. 

This scaling is motivated by the classical diffusive regime, under which symmetric nearest-neighbor random walks converge to Brownian motion. The time acceleration by $N^2$ ensures nontrivial limiting dynamics, while the spatial rescaling by $1/N$ allows us to interpret the process in the continuous state space $[0, \infty) \cup \{\Delta\}$. 

The discrete generator $\msf{L}_N : \mc{C}_0(\tilde{\bb{G}}_N) \to \mc{C}_0(\tilde{\bb{G}}_N)$ of the $(\alpha, \beta, A, B)$-boundary rescaled random walk $B^N$ is defined via 
\begin{equation}\label{eq:boundaryRWgen}
\begin{split}
\msf{L}_N\gn{f}{x}{}\,&=\,\frac{N^2}{2}\left[\gn{f}{x + \frac{1}{N}}{}+\gn{f}{x - \frac{1}{N}}{}-2\gn{f}{x}{}\right]\one_{[x>0]}\\
&\qquad+\,N^2\Big[\frac{A}{N^\alpha}[\gn{f}{\Delta}{}-\gn{f}{0}{}]+\frac{B}{N^\beta}[\gn{f}{\frac{1}{N}}{}-\gn{f}{0}{}]\Big]\one_{[x=0]}\\
&=\frac{1}{2}\gn{\Delta^Nf}{x}{}\one_{x > 0}\,+\,\left[\frac{B}{N^{\beta - 1}}\nabla^N\f{0}\,-\,\frac{A}{N^{\alpha - 2}}\f{0}\right]\one_{[x=0]}\,.
\end{split}
\end{equation}
where $\Delta^Nf$ is the discrete Laplacian of $f$ while by $\nabla^Nf$ we mean the discrete gradient.

Observe that, on the equation above~\eqref{eq:boundaryRWgen}, the first parcel in the equation above corresponds to the generator of standard Brownian motion, while the second term encodes the boundary behavior at zero, which is controlled by the parameters $A/N^\alpha$ and $B/N^\beta$, where $\alpha, \beta \geq 0$ control the scaling of the jump rates from the origin to the cemetery state and to the bulk, respectively.

\subsection{Limiting Diffusions: The general Brownian motion}
The family of diffusion processes that may arise as scaling limits of boundary random walks on the half-line was completely described by in~\cite[Chapter 6]{Knight1981}. These processes are continuous and strong Markov, coincide with Brownian motion away from the origin, and are distinguished by their boundary behavior at zero.

The key idea is that all such processes coincide with Brownian motion until they hit zero, but may then be reflected, absorbed, or killed—with arbitrary mixtures of these mechanisms allowed. This leads to a rich family of diffusions indexed by a triplet of non-negative coefficients.

By $B_0(t)$, we denote the Brownian motion on $[0,\infty)$ absorbed upon hitting zero. By $\Delta$ we represent the cemetery state, which serves as an isolated point appended to the state space. Let $\tau_0$ to denote the hitting time of the origin. Accordingly, we define the extended state space $\bb{G} := \{\Delta\}\cup[0,\infty)$, where the process can be sent to $\Delta$, only after its absorption at zero.
\begin{definition}\cite[page 153]{Knight1981}
    A general Brownian motion on the positive half-line is a diffusion process $\msf{W}$ on the set $\bb{G}$ such that the absorbed process $\{\msf{W}(t\wedge \tau_0):t\geq 0\}$ has the same distribution as $B_0$ for any starting point $x\geq 0$.
\end{definition}

Knight’s theorem shows that this class of processes is fully characterized by a boundary condition at the origin involving three non-negative parameters. These parameters correspond to different probabilistic mechanisms acting at zero: absorption into the cemetery state, instantaneous reflection, and killing after an exponential holding time at the boundary. In the interior of $[0,\infty)$, the process evolves as standard Brownian motion, while the behavior at the origin is entirely encoded by this boundary condition.

We now introduce the functional setting in which the limiting processes will be defined. Fix any $x_0\in \bb{G}$. We write $\lim_{x\to \infty}f(x) = 0$ if
\begin{equation}
    \lim\limits_{x:d(x,x_0)\to\infty}\f{x}\,=\,0\,.
\end{equation}

\begin{definition}\rm
The space of continuous functions vanishing at infinity, denoted by $\mc{C}_0(\bb{G})$, is the space of continuous functions $f:\bb{G}\to\bb{R}$ that holds
\begin{itemize}
    \item $\lim_{x\to \infty}f(x) = 0$\,;
    \item for any $x_0\in\bar{\bb{G}}\setminus\bb{G}$, $\lim_{x\to x_0}f(x) = 0$\,;
    \item $\f{\Delta} = 0$\,.
\end{itemize}
\end{definition}

\begin{theorem}\cite[Theorem 6.2, page 157]{Knight1981}\label{thm:Fellerprocess}
Any general Brownian motion $\msf{W}$ on $[0,\infty)$ has generator $\msf{L} = \frac{1}{2}\frac{d^2}{dx^2}$ with corresponding domain
\begin{equation*}
\mf{D}(\mc{L})\,=\,\{f\in \mc{C}_0^2(\bb{G}):f''\in\mc{C}_0([0,\infty),\bb{R})\,\text{ and }\,c_1f(0)\,-\,c_2f'(0)\,+\, \frac{c_3}{2}f''(0)\,=\,0\},
\end{equation*}
for some $c_i \geq 0$ such that $c_1 + c_2 + c_3 = 1$ and $c_1\neq 1$.
\end{theorem}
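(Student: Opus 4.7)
The plan is to derive the explicit form of $\mc{L}$ and its domain in three stages: identifying the action of $\mc{L}$ in the interior, establishing the regularity of $\mf{D}(\mc{L})$, and extracting the boundary condition at the origin via excursion theory. For the first two stages, I would fix $f \in \mf{D}(\mc{L})$. Since $\msf{W}$ coincides with standard Brownian motion up to the hitting time $\tau_0$ of the origin, Dynkin's formula applied to $f$ before the exit from a small ball around any interior point $x>0$ yields $\mc{L} f(x) = \tfrac{1}{2} f''(x)$ wherever $f$ is smooth there. Resolvent analysis -- writing $f = R_\lambda g$ for $g \in \mc{C}_0(\bb{G})$ and solving $(\lambda - \tfrac{1}{2}\, d^2/dx^2) f = g$ on $(0,\infty)$ via the exponentials $e^{\pm \sqrt{2\lambda}\, x}$, together with the boundary constraint $f(\Delta)=0$ -- shows that any $f \in \mf{D}(\mc{L})$ must lie in $\mc{C}_0^2(\bb{G})$ with $f'' \in \mc{C}_0([0,\infty))$. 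This simultaneously identifies $\mc{L} = \tfrac{1}{2}\, d^2/dx^2$ and pins down the regularity appearing in the definition of the domain.

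It remains to derive the boundary condition at the origin, for which I would invoke It\^o's excursion theory. Because $\msf{W}$ is strong Markov and agrees with absorbed Brownian motion before $\tau_0$, its dynamics at zero are encoded by a local time $L_t$ and an excursion measure $\mathbf{n}$ on excursions taking values in $(0,\infty)\cup\{\Delta\}$. Every excursion has one of three disjoint fates: return to $0$ after a finite lifetime (reflection), termination at $\Delta$ (killing into the cemetery), or accumulation of Lebesgue-positive time at $0$ (stickiness). Evaluating the identity $\mc{L} R_\lambda g(0) = \lambda R_\lambda g(0) - g(0)$ and rewriting both sides through the excursion Laplace functional and the sticky holding rate produces a linear relation
\begin{equation*}
c_1 f(0) - c_2 f'(0) + \tfrac{c_3}{2} f''(0) = 0,
\end{equation*}
where $c_1, c_2, c_3 \geq 0$ encode, respectively, the killing intensity into $\Delta$, the reflection intensity (normalized via the excursion-height measure), and the sticky holding rate. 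Non-negativity and the normalization $c_1 + c_2 + c_3 = 1$ follow from the fact that these coefficients arise as normalized masses of underlying measures.

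The main obstacle is to make the excursion decomposition fully rigorous and to rule out any further boundary mechanisms. Excluding, for example, a jump component in $\mathbf{n}$ that would send the process from $0$ to strictly positive points, or a drift-type term at the origin, uses the assumed continuity of paths and the Brownian interior dynamics in an essential way; this is most cleanly done by testing the candidate boundary relation against resolvents of carefully chosen $g \in \mc{C}_0(\bb{G})$ that isolate each $c_i$ in turn. Finally, the exclusion $c_1 \neq 1$ sidesteps the degenerate case in which the process is killed instantly upon reaching zero with no further continuous-path dynamics there, in which case $\msf{W}$ reduces to Brownian motion killed at $0$ and the three-parameter description becomes uninformative; this is the most delicate step and, in my plan, is handled by showing that a pure $c_1 = 1$ boundary condition is incompatible with the existence of nontrivial excursions required by the definition of a general Brownian motion on $\bb{G}$.
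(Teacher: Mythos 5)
The paper does not prove this statement at all: it is imported verbatim from Knight \cite[Theorem 6.2, page 157]{Knight1981} and used as a black box throughout, so there is no internal argument to compare yours against. Your outline is, in substance, the classical Feller/It\^o--McKean/Knight route that the cited source follows: Dynkin's formula identifies $\mc{L}f=\tfrac{1}{2}f''$ away from the origin, solving $(\lambda-\tfrac{1}{2}\tfrac{d^2}{dx^2})f=g$ for $f=R_\lambda g$ pins down the regularity of the domain, and the boundary relation is read off from the behaviour of the resolvent at $0$. As a plan it is viable, but two points in your third stage are stated incorrectly and would need repair in a full write-up.

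First, ``accumulation of Lebesgue-positive time at $0$'' is not a fate of an individual excursion. In the excursion picture the stickiness coefficient enters as the drift of the inverse local time, $\tau_\ell=c_3\ell+\sum(\text{excursion lifetimes})$, while killing appears either as an atom of $\mathbf{n}$ on the path constantly equal to $\Delta$ or as a killing rate in local time; neither is a ``fate'' of a generic excursion into $(0,\infty)$, which, by the requirement that the process agree with $B_0$ off the origin, must simply run as a Brownian excursion. Treating stickiness and killing as excursion outcomes on the same footing as returns to $0$ will make the Laplace-functional computation that is supposed to produce $c_1f(0)-c_2f'(0)+\tfrac{c_3}{2}f''(0)=0$ hard to close. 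Second, your justification of $c_1\neq 1$ is wrong as stated: the definition of a general Brownian motion does not ``require nontrivial excursions'' (the absorbed case $c_3=1$ has none), so incompatibility with excursions cannot be the reason. The case $c_1=1$ is excluded because instantaneous killing at $0$ violates the defining requirement that $\{\msf{W}(t\wedge\tau_0)\}$ have the law of $B_0$ --- at time $\tau_0$ the killed process is already at $\Delta$ while $B_0$ sits at $0$ --- which is exactly why the paper treats the killed Brownian motion as a degenerate limit recovered by a spatial shift rather than as a member of Knight's class.
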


In this setting, the parameters $c_1$, $c_2$, and $c_3$ lie in the unit simplex, i.e., $c_1 + c_2 + c_3 = 1$ with $c_i \geq 0$, and the case $c_1 = 1$ is excluded. These coefficients appear in the domain of the generator of the process, which is the classical Laplacian on $[0,\infty)$ subject to a generalized boundary condition.

In~\cite{Erhard2024}, the authors rigorously established a functional central limit theorem for a broad class of time-continuous boundary random walks on the rescaled positive integers defined as in~\eqref{eq:boundaryRWgen}, identifying all possible diffusive limits under general scaling regimes. Their results show that the limiting behavior depends critically on the boundary transition rates governed by the parameters $A/N^\alpha$ and $B/N^\beta$.

More precisely, by tuning the parameters $\alpha$ and $\beta$, the rescaled boundary random walk may converge to different Feller diffusions on the extended half-line $[0, \infty) \cup \{\Delta\}$, depending on the asymptotic strength of the transition rates at the origin. These limiting processes are fully characterized by the generator $\mc{L} = \frac{1}{2} \frac{d^2}{dx^2}$ acting on a domain of functions vanishing at infinity and satisfying the boundary condition
\begin{equation*}
    c_1 f(0) - c_2 f'(0) + \frac{c_3}{2} f''(0) = 0\,,
\end{equation*}
where $c_1, c_2, c_3 \geq 0$ and $c_1 + c_2 + c_3 = 1$, with $c_1 \neq 1$. The values of $\alpha$, $\beta$, $A$, and $B$ determine the effective coefficients $c_i$ and hence the nature of the limit process.

A particularly rich case occurs when $\alpha = 2$ and $\beta = 1$, with both $A, B > 0$. In this regime, the limiting process is the {\it mixed Brownian motion}, which incorporates the full spectrum of boundary behaviors at the origin. Unlike classical Brownian variants—such as the absorbed, reflected, or sticky Brownian motions—which exhibit a single type of interaction at the boundary, the mixed Brownian motion blends all three mechanisms into a unified process. It evolves as standard Brownian motion in the interior, but at the origin, it exhibits two degrees of freedom: one controls how elastic the particle is before being killed, and the other governs the balance between reflection and absorption.

In particular, for the mixed Brownian motion, none of the coefficients vanish -- that is, $c_1, c_2, c_3 > 0$—so the process truly displays all three boundary behaviors simultaneously. These parameters lie in the unit simplex $c_1 + c_2 + c_3 = 1$, meaning that once any two are chosen, the third is uniquely determined. This structure reflects the fact that the process admits exactly two degrees of freedom at the boundary, encoding both the elasticity of reflection prior to killing and the proportion of time the particle remains absorbed at the origin.

If $\beta = 1$ and $\alpha > 2$, the limit is the {\it sticky Brownian motion} characterized by the choice $c_1 = 0$. This process behaves like standard Brownian motion away from the origin, but when it reaches zero, it tends to remain there for a positive amount of Lebesgue time before resuming its motion. The stickiness arises from a nonzero boundary local time density, which slows down the particle and makes the origin ``attractive'' in a temporal sense, without necessarily leading to absorption.

In the regime where $\alpha = \beta + 1$ with $\beta \in [0,1)$, the process converges to the {\it elastic Brownian motion}, corresponding to the case $c_3 = 0$. In this case, the particle is reflected at the origin as in the reflected Brownian motion, but it may also be killed when the local time of the origin reach certain exponential random variable. This construction reflects partial permeability of the boundary, interpolating between reflection and killing.

When $\alpha = 2$ and $\beta > 1$, the limiting process is given by the {\it exponential holding Brownian motion}, which corresponds to the case $c_2 = 0$: Upon hitting the origin, the process becomes trapped there for an exponentially distributed holding time, after which it is sent to the cemetery state. Unlike the sticky Brownian motion, the holding time here is independent of the trajectory and determined by an external exponential clock, representing a delayed killing mechanism.

If $\alpha > \beta + 1$ with $\beta \in [0,1)$, the walk converges to the {\it reflected Brownian motion}, in which the origin acts as a perfectly reflecting boundary: the particle bounces instantaneously upon hitting zero, and the process remains supported on $[0, \infty)$. There is no killing or absorption in this case. This behavior corresponds to the case in which the coefficient $c_2 = 1$.

In the case where $\alpha > 2$ and $\beta > 1$, the process converges to the {\it absorbed Brownian motion}, which mean that the particle is trapped as soon as it hits the origin. This behavior corresponds to the coefficient $c_3 = 1$.

Finally, the {\it killed Brownian motion} arises in a degenerate regime where the process is shifted slightly to the right and does not actually reach the origin in the limit. This occurs when $\alpha < \beta + 1$ and $\beta < 1$, or when $\alpha < 2$ and $\beta > 1$. In this setting, the process is sent to the cemetery state upon approaching zero, but without spending time there. Formally, this corresponds to the case $c_1 = 1$ in the boundary condition, which falls outside of the standard Feller framework, but is recovered by a limiting argument involving spatial shift.

\subsection{Main result}

Theorem~\ref{thm:main} establishes the convergence of a suitably rescaled boundary random walk $B^N$ to the class of Feller diffusions on the half-line, as classified by Knight's Theorem~\ref{thm:Fellerprocess} which includes the wide spectrum of processes: the classical reflected and absorbed Brownian motions; the more delicate sticky and elastic Brownian motions; the Brownian motion with exponential holding at the origin; and the mixed Brownian motion, which simultaneously displays reflection, absorption, and killing. Each of these limiting behaviors emerges from specific choices of the boundary parameters $A, B, \alpha, \beta$, which govern the asymptotic transition rates from the origin to the interior or to the cemetery state. 

In this sense, the boundary random walk serves as a discrete and flexible approximation scheme that unifies all such processes within a single framework.

\begin{theorem}[Scaling limit to the general Brownian motion]\label{thm:main}

Let $\bb{G} := [0,\infty) \cup \{\Delta\}$ and consider the sequence of rescaled processes $\left\{B_{t}^{N} : t\in[0,T]\right\}_{N \in \bb{N}}$ whose generator is given by $\msf{L}_N$ as in~\eqref{eq:boundaryRWgen}. 
\begin{enumerate}
\item Assume $A = c_1/c_3$ and $B = c_2/c_3$.  Then, as $N \to \infty$, the sequence of processes $\left\{ B_{t}^{N} : t\in[0,T]\right\}_{N \in \bb{N}}$ converges in distribution in the Skorokhod space $\mc{D}([0,T],\bb{G})$ endowed with the $J_1$-topology, to the mixed Brownian motion.
\item Assume $c_1 = 0$ and $B = c_2/c_3$. Then, as $N \to \infty$, the sequence of processes $\left\{ B_{t}^{N} : t\in[0,T]\right\}_{N \in \bb{N}}$ converges in distribution in the Skorokhod space $\mc{D}([0,T],\bb{G})$ endowed with the $J_1$-topology, to the sticky Brownian motion.
\item Assume $c_2 = 0$ and $A = c_1/c_3$. Then, as $N \to \infty$, the sequence of processes $\left\{ B_{t}^{N} : t\in[0,T]\right\}_{N \in \bb{N}}$ converges in distribution in the Skorokhod space $\mc{D}([0,T],\bb{G})$ endowed with the $J_1$-topology, to the exponential holding Brownian motion.
\item Assume $c_3 = 1$. Then, as $N \to \infty$, the sequence of processes $\left\{ B_{t}^{N} : t\in[0,T]\right\}_{N \in \bb{N}}$ converges in distribution in the Skorokhod space $\mc{D}([0,T],\bb{G})$ endowed with the $J_1$-topology, to the absorbed Brownian motion.
\item Assume $c_1 = 1$. Then, as $N \to \infty$, the sequence of processes $\left\{ B_{t}^{N} : t\in[0,T]\right\}_{N \in \bb{N}}$ converges in distribution in the Skorokhod space $\mc{D}([0,T],\bb{G})$ endowed with the $J_1$-topology, to the reflected Brownian motion.
\item Assume $c_3 = 0$, $A = c _1$ and $B = c_2$. Then, as $N \to \infty$, the sequence of processes $\left\{ B_{t}^{N} : t\in[0,T]\right\}_{N \in \bb{N}}$ converges in distribution in the Skorokhod space $\mc{D}([0,T],\bb{G})$ endowed with the $J_1$-topology, to the elastic Brownian motion.
\item Assume $c_1 = 1$. Then, as $N \to \infty$, the sequence of processes $\left\{ B_{t}^{N} : t\in[0,T]\right\}_{N \in \bb{N}}$ converges in distribution in the Skorokhod space $\mc{D}([0,T],\bb{G})$ endowed with the $J_1$-topology, to the killed Brownian motion.
\end{enumerate}    
\end{theorem}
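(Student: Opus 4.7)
The plan is to carry out the standard three-step programme in the martingale problem formulation of Stroock and Varadhan: (i) prove tightness of the laws of $\{B^N\}$ in $\mc{D}([0,T],\bb{G})$ endowed with the $J_1$-Skorokhod topology; (ii) show that any subsequential weak limit solves the martingale problem associated to the generator $(\mc{L},\mf{D}(\mc{L}))$ of Theorem~\ref{thm:Fellerprocess} with the parameters $(c_1,c_2,c_3)$ appropriate to each case; (iii) invoke uniqueness of that martingale problem to conclude convergence of the full sequence. Tightness is carried out in Section~\ref{sec:3} by applying the Aldous--Rebolledo criterion to the Dynkin martingale
\[
M_t^{N,f} \,:=\, f(B_t^N) - f(B_0^N) - \int_0^t \msf{L}_N f(B_s^N)\, ds,
\]
as $f$ ranges over a core of smooth test functions, and uniqueness of the martingale problem is classical since $(\mc{L},\mf{D}(\mc{L}))$ generates a Feller semigroup on $\mc{C}_0(\bb{G})$.

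The heart of the proof is step (ii). For each $f\in\mf{D}(\mc{L})$ we decompose, as in~\eqref{eq:boundaryRWgen},
\[
\msf{L}_N f(x) \,=\, \tfrac{1}{2}\Delta^N f(x)\,\one_{\{x>0\}} \,+\, \Psi_N(f)\,\one_{\{x=0\}},
\]
with $\Psi_N(f) := \tfrac{B}{N^{\beta-1}}\nabla^N f(0) - \tfrac{A}{N^{\alpha-2}} f(0)$. A Taylor expansion yields $\tfrac{1}{2}\Delta^N f \to \tfrac{1}{2} f''$ uniformly on compact subsets of $(0,\infty)$, so the interior contribution to the Dynkin martingale converges to the correct Brownian term by continuous mapping. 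The boundary contribution collapses to a single product,
\[
\int_0^t \Psi_N(f)\,\one_{\{B_s^N = 0\}}\, ds \,=\, \Psi_N(f)\cdot L_t^N, \qquad L_t^N := \int_0^t \one_{\{B_s^N = 0\}}\, ds,
\]
and the local-time asymptotics developed in Section~\ref{sec:5} provide, in each parameter regime, the precise order of $L_t^N$ needed to balance the order of $\Psi_N(f)$ dictated by the exponents $\alpha-2$ and $\beta-1$.

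The mixed case (1), $\alpha=2$, $\beta=1$, $A=c_1/c_3$, $B=c_2/c_3$, is paradigmatic: $\Psi_N(f) \to B f'(0)- A f(0)$, and the boundary condition $c_1 f(0)-c_2 f'(0)+\tfrac{c_3}{2} f''(0)=0$ built into $\mf{D}(\mc{L})$ rewrites this limit as $\tfrac{1}{2} f''(0)$; together with the convergence of $L_t^N$ to the sticky occupation time $\int_0^t \one_{\{B_s=0\}}\, ds$, this yields $\int_0^t \msf{L}_N f(B_s^N)\, ds \to \int_0^t \tfrac{1}{2} f''(B_s)\, ds$ and hence $M_t^{N,f}\to M_t^f$. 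In the reflected, absorbed, elastic, sticky and exponential-holding regimes, the same scheme applies, but one or more of the $c_i$ vanishes: the corresponding factor in $\Psi_N(f)$ or $L_t^N$ diverges and is compensated by the vanishing of the other, with the balance again supplied by Section~\ref{sec:5}; derivatives whose coefficient is zero in the boundary condition are automatically suppressed because the associated scale is sub-critical. The killed regime ($c_1=1$) lies strictly outside Knight's framework and is recovered by the spatial-shift limiting procedure outlined after Theorem~\ref{thm:Fellerprocess}.

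The main obstacle is step (ii) and, within it, the joint asymptotic analysis of $\Psi_N(f)$ and $L_t^N$: each of these factors may a priori diverge or vanish, and only their product possesses a nontrivial limit. Matching the two scales correctly in each of the seven regimes relies on the sharp local-time estimates of Section~\ref{sec:5}, and verifying that these estimates combine with the right Taylor remainder to reproduce exactly the boundary condition $c_1 f(0)-c_2 f'(0)+\tfrac{c_3}{2} f''(0)=0$ constitutes the bulk of the technical work. Once the limit has been identified with a solution of the martingale problem for $(\mc{L},\mf{D}(\mc{L}))$, Feller uniqueness closes the argument and the convergence of the full sequence follows from the standard sub-subsequence principle.
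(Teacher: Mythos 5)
Your overall architecture (tightness via Aldous, identification of limit points through the Dynkin martingale, closure by well-posedness of the martingale problem) matches the paper's. The gap lies in how you execute the identification step. You factor the boundary contribution as $\Psi_N(f)\cdot L_t^N$ with $L_t^N=\int_0^t \one_{\{B_s^N=0\}}\,ds$ and then assert (a) that $L_t^N$ converges to the occupation time $\int_0^t \one_{\{B_s=0\}}\,ds$ of the limit, and (b) that in the degenerate regimes the ``precise order'' of $L_t^N$ balances the divergence or vanishing of $\Psi_N(f)$ so that the product has a nontrivial limit. Neither claim is supported by what is actually proved in Section~\ref{sec:5}: Theorem~\ref{thm:localTime} and Corollary~\ref{localTimeBound} give only one-sided \emph{upper} bounds on $\E[\mf{L}_n^{X^N}(0)]$, not asymptotics or lower bounds, so they cannot identify the limit of $\Psi_N(f)\cdot L_t^N$. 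Moreover, the functional $\omega\mapsto\int_0^t\one_{\{\omega_s=0\}}\,ds$ is not continuous for the $J_1$-topology (the indicator $\one_{\{x=0\}}$ is discontinuous), so convergence of $L_t^N$ does not follow from $B^N\Rightarrow B$ by continuous mapping; in the sticky and mixed regimes, where the limiting occupation time at the origin is genuinely positive, this is precisely the hard point and would require a separate argument you do not supply. The same objection applies to your interior term, which carries the discontinuous cutoff $\one_{\{x>0\}}$.

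The paper avoids both difficulties by never splitting the compensator along the event $\{B_s^N=0\}$. Instead it writes $\sum_j \hat{\msf{L}}_N f(X^N_j)=\sum_j \mc{L}f(B^N_{s_j})\Delta s_j+\sum_j h_N(B^N_{s_j})N^{-2}$ with $h_N=\msf{L}_Nf-\mc{L}f$, and exploits the fact that for $f\in\mf{D}(\mc{L})$ the boundary condition forces the limit of the discrete boundary rate $\Psi_N(f)$ to coincide with $\mc{L}f(0)=\tfrac12 f''(0)$; hence $\mc{L}f=\tfrac12 f''$ is a single \emph{continuous} function on $\bb{G}$, the Riemann sum converges by continuous mapping, and all that remains is to show $\sum_j h_N(B^N_{s_j})N^{-2}\to 0$ in probability. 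It is only here, as a multiplier of $h_N(0)$, that the local-time \emph{upper} bounds of Section~\ref{sec:5} are needed --- which is exactly what they deliver. You would also need the uniform integrability step (the paper's Lemma~\ref{lem:UnifMartingale}) to transfer the martingale property to the limit; your proposal passes over this. I would therefore restructure your step (ii) along these lines rather than attempting to prove convergence of the discrete occupation time itself.
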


\section{Local time of boundary random walks at origin}\label{sec:5}
This section is devoted to the asymptotic analysis of the local time at the origin for the boundary random walk $X^N$ defined in Section~\ref{sec:2}. Briefly, The tlocal time at the origin, denoted by $L^N_t$, quantifies how much time the random walk spends at zero up to time $t$.
\subsection{Asymptotic Analysis of Return Probabilities and Expected Local Time}
Let $p \in (0,1)$ and by $X^p$ (or simply $X$) we mean a boundary random walk over $\mathbb{Z}_{\geq 0}$ with homogeneous transition probabilities outside the origin, and at $0$, it satisfies $p(0,1) = p = 1 - p(0,0)$. 

Define $F_k := P(X_k = 0)$ and assume $X_0 = 0$. Our goal is to derive an estimate for $F_k$ by employing a renewal-type recurrence relation, which is a fundamental technique in the analysis of random walks. We start by observing the following two mutually exclusive scenarios for the random walk after $k$ steps:
\begin{enumerate}
	\item The random walk do not leave the origin at first step, and its behavior is governed by $F_{k-1}$ or;
	\item The random walk  from $X_0 = 0$ to $X_1 = 1$. In that case, for $1 \leq j \leq k/2$, we consider the first return to $0$ after $2j$ steps: 
    \begin{itemize}
        \item The first step, it moves from $0$ to $1$ with probability $p$.
        \item The next $2j - 2$ steps describe an excursion starting and ending at $1$ without touching zero.
        \item At time $2j$, the walk returns from $1$ to $0$ with probability $1/2$.
        \item Finally, for the remaining $k - 2j$ steps, the process restarts, and its behavior is governed by the distribution of $F_{k - 2j}$.
    \end{itemize}
\end{enumerate}
If $h_j$ denotes the probability of the first return to $0$ at time $2j$, which depends on a constant $C_{j-1}$, it takes the form:
$$h_j = P(\text{first return to } 0 \text{ at time } 2j) = \frac{p}{2} \cdot \frac{C_{j-1}}{2^{2j - 2}}.$$
Thus, the recursive equation for $F_k$ is given by:
\begin{equation*}
	\begin{cases}
		F_k = (1 - p)F_{k-1} + \displaystyle\sum_{j=1}^{\lfloor k/2 \rfloor} h_j F_{k -2 j}, \\
		F_0 = 1.
	\end{cases}
\end{equation*}
The constant $C_{j-1}$ counts the number of paths that start at $1$ and return to $1$ after $2j - 2$ steps, known as {\it Dyck paths}. These are given by the {\it Catalan numbers}, whose exact form is
\begin{equation*}
    C_n\,=\,\frac{1}{n + 1}\binom{2n}{n}\,.
\end{equation*}
We conclude that $F_0 = 1$. Moreover, by applying the recurrence relation iteratively, we obtain
\begin{equation}\label{eq:renewalRelation}
    F_k\,=\,(1\,-\,p)F_{k-1} + \sum_{j=1}^{\lfloor k/2 \rfloor} \frac{p}{2}\cdot\binom{2j - 2}{j - 1}\frac{1}{j \cdot 2^{2j - 2}} F_{k - 2j}
\end{equation}
To solve the recurrence relation for $F_k$, we employ the method of generating functions. This powerful analytical technique, known alternatively as {\it Z-transforms} in discrete mathematics or {\it Green's functions} in the study of Markov chains and renewal processes, converts the convolution sum into an algebraic product, thereby substantially simplifying the analysis.

The generating function for the sequence $\{F_k\}$ is formally defined via
\begin{equation*}
    F(x)\,:=\,\sum_{k=0}^\infty F_k x^k\,,
\end{equation*}
and the renewal relation translates into an algebraic equation in terms of generating functions  
\begin{equation}\label{eq:renewalRelation1}
    F(x)\,=\,(1-p)xF(x)\,+\,H(x^2)F(x)\,+\,1,
\end{equation}
where $H(x)$ is the generating function of $h_j$ starting at $1$ (see~\cite{GF}), that is 
\begin{equation*}
    H(x)\,=\,\sum_{j=1}^{\infty}h_jx^j.
\end{equation*}
Let us now focus our analysis on the second term in equation~\eqref{eq:renewalRelation1}. Observe the generating function $H(x)$ can be expressed as following
\begin{equation}\label{eq:renewalRelation3}
\begin{split}
	H(x)\,&=\,\frac{p}{2}\sum_{j=1}^{\infty}\frac{C_{j-1}}{4^{j-1}}x^j\\
    &=\,\frac{p x}{2}\sum_{n=0}^\infty C_n \Bigl(\frac{x}{4}\Bigr)^n\\
    &=\,\frac{p x}{2} \frac{2(1-\sqrt{1-x})}{x}\\
    &=\,p\bigl(1-\sqrt{1-x}\bigr).
\end{split}
\end{equation}
Observe that the above relation was obtained by using the Catalan generating function since we are in the case in which $|x|< 1$. From relation~\eqref{eq:renewalRelation1} and by plugging equation~\eqref{eq:renewalRelation3}, one can show that $F$ satisfies
\begin{equation}\label{eq:renewalRelation2}
	F(x)\,=\,\frac{1}{1 - (1-p)x - H(x^2)}
\end{equation}
The long-term behavior of $F_k$ can be determined by analyzing the singularities of its generating function $F(x)$. This approach utilizes techniques from complex analysis, especially Tauberian theorems, which establish a fundamental link between
\begin{itemize}
	\item The local behavior of $F(x)$ near its dominant singularity;
	\item The asymptotic decay of its coefficients $F_k$ as $k$ tends to infinity.
\end{itemize}
We focus our analysis on the local expansion near the dominant singularity at $x=1$. Introducing the parameter $\eps = 1 - x$, we consider the limit as $\eps \to 0^+$. Observe that
\begin{equation}\label{eq:renewalRelation4}
\begin{split}
	1\,-\,(1 - p)x\,-\,H(x^2)\,&=\,1\,-\,(1-p)x\,-\,p\bigl(1\,-\,\sqrt{1\,-\,x^2}\bigr)\\
    &=(1 - p)(1\,-\,x)\,+\,p\sqrt{1\,-\,x^2}\\
    &=\,(1 - p)\eps\,+\,p\sqrt{2}\eps^{1/2}\sqrt{1\,-\,\pfrac{\eps}{2}}\\
    &=\,(1 - p)\eps\,+\,p\sqrt{2}\eps^{1/2}\bigl(1\,-\,\pfrac{\eps}{4}\,+\,O(\eps^2)\bigr)\\
    &=\,p\sqrt{2}\eps^{1/2}\Bigl[1\,+\,\frac{1-p}{p\sqrt{2}}\eps^{1/2}\,-\,\frac{\eps}{4}\,+\,O(\eps^2)\Bigr]
\end{split}
\end{equation}
Therefore, plugging equation~\eqref{eq:renewalRelation4} into~\eqref{eq:renewalRelation2}, we obtain
\begin{equation*}
    F(x)\,=\,\frac{\eps^{-1/2}}{p\sqrt{2}}\left[1\,-\,\frac{1 - p}{p\sqrt{2}}\eps^{1/2}\,+\,\left(\frac{(1-p)^2}{2p^2}+\frac{1}{4}\right)\eps\,+\,O(\eps^{3/2})\right]
\end{equation*}
and, the only non-analytic term at \(x=1\) of relation above is proportional to \((1-x)^{- 1/2}\). We proceed applying~\cite[Corollary 2]{F-O}, yielding
\begin{equation*}
	F_k\,=\,[x^k]F(x)\,\sim\,\frac{A_1}{\Gamma\big(\tfrac12\big)} k^{-1/2}
\end{equation*}
where 
\begin{equation*}
    A_1 = \frac{1}{p\sqrt{2}}\quad\text{ and }\quad\Gamma\big(\tfrac12\big) = \sqrt{\pi}
\end{equation*}
leading to the asymptotic behavior of $F_k$ given by
\begin{equation}\label{eq:asymptoticFk}
F_k\,\sim\,\frac{1}{p\sqrt{2\pi}} k^{-1/2}.
\end{equation}
Let $\mf{L}_N^{X^p}(x)$ to denote the local time of $x$ of the random walk $X^p$ after $N$ steps. Formally
\begin{equation*}
    \mf{L}_N^{X^p}(x)\,:=\,\sum_{k=0}^{N}\one_{[X_k^p = x]}\,.
\end{equation*}
The next result, we estimate the expected local time at the origin for the boundary random walk $X^p$. Indeed, by using the asymptotic behavior of $F_k$, we can derive an upper bound for the expected local time at the origin. We start by expressing the expected local time as
\begin{equation}\label{eq:localTimeTheory1}
\begin{split}
\Expec{\mf{L}_N^{X^p}(0)}{}{}\,&=\,\sum_{k=0}^{N} P(X_{k}^p=0)\,=\,\sum_{k=1}^NF_k\\
&\overset{\eqref{eq:renewalRelation}}{=}\,(1-p)\sum_{k = 1}^NF_{k-1}\,+\,\sum_{k=1}^{N} \sum_{j=1}^{k/2} \frac{p}{2}\binom{2j-2}{j-1}\frac{1}{j2^{2j-2}} F_{k-2j}\\
&\overset{\eqref{eq:asymptoticFk}}{\lesssim}\frac{(1-p)}{p\sqrt{2\pi}}\sum_{k = 1}^Nk^{-1/2}\,+\,\frac{1}{2\sqrt{2\pi}} \sum_{k=1}^{N} \sum_{j=1}^{k/2}\binom{2j-2}{j-1}\frac{1}{j2^{2j-2}} (k-2j)^{-1/2}
\end{split}
\end{equation}
It remains estimate the second summation. Recalling that, for each $j \geq 1$, from the term
\begin{equation*}
    C_{j-1}\,=\,\frac{1}{j\,2^{2j-2}}\binom{2j-2}{\,j-1}\,,
\end{equation*}
one can verify, using the generating function for the Catalan numbers, that
\begin{equation*}
    \sum_{j=1}^\infty C_{j-1}\,=\,2\,.
\end{equation*}
Additionally, for each $k > 0$, it holds that
\begin{equation}\label{eq:localTimeTheory2}
\begin{split}
\sum_{j=1}^{k/2} C_{j-1}(k-2j)^{-3/2}\,&\le\,(k-1)^{-3/2} \sum_{j=1}^{k/2} C_{j-1}\\
&\le\,(k-1)^{-3/2} \sum_{j=1}^{\infty} C_{j-1}\\
&\le 2(k-1)^{-3/2}.
\end{split}
\end{equation}
By replacing \eqref{eq:localTimeTheory2} into \eqref{eq:localTimeTheory1}, we obtain that
\begin{equation}\label{GeneralBound}
\begin{split}
\Expec{\mf{L}_N^{X^p}(0)}{}{}&\leq\frac{(1-p)}{p\sqrt{2\pi}}\sum_{k = 1}^Nk^{-1/2}\,+\,\frac{1}{2\sqrt{2\pi}} \sum_{k=1}^{N} 2(k-1)^{-1/2}\\
\end{split}
\end{equation}

Thus, we have established the following theorem, which provides an upper bound on the expected local time at the origin for the boundary random walk $X^p$.
\begin{theorem}\label{thm:localTime}
Let $X^p$ be the boundary random walk defined previously. Then, for any $N \in \bb{N}$, the expected local time at the origin asymptotically behaves as
\begin{equation*}
    \Expec{\mf{L}_N^{X^p}(0)}{}{}\,\sim\frac{1-p}{p}N^{1/2}\,+\,\frac{2}{\sqrt{2\pi}}N^{1/2}.
\end{equation*}

\end{theorem}

In the case of Boundary random walk $X^N$ defined at section~\ref{sec:2}, we firstly note that $\Delta$ act as an absorbing state, hence the Theorem~\ref{thm:localTime} remains valid. Indeed, when the random walk reaches state $\Delta$, it cannot return to zero in the subsequent steps. Furthermore, in this extended framework, the transition probability $1-p$ is replaced by 
\begin{equation*}
    1\,-\,p\,-\,p_\Delta\,=\,1\,-\,\frac{B}{N^\beta}\,-\,\frac{A}{N^\alpha}
\end{equation*}
which must be substituted in equation \eqref{GeneralBound}.

Finally, observe that:
\begin{itemize}
	\item Since $1-p \in O(1)$ and $p^{-1}\in O(N^\beta)$, it follows that
    \begin{equation*}
        \frac{1-p}{p}N^{1/2}\in O\bigl(N^{1/2 + \beta}\bigr)\,;
    \end{equation*}
	\item $(2\sqrt{2\pi})^{-1} \in O(1)$, and consequently, we have that 
	\begin{equation*}
        \frac{2}{\sqrt{2\pi}}N^{1/2}\in O(N^{1/2})\,
    \end{equation*}
\end{itemize}
and hence, we can conclude that the expected local time at the origin for the boundary random walk $X^N$ satisfies the following upper bound
\begin{corollary}\label{localTimeBound}
The expectation of the local time $\mf{L}_N^{X^N}(0)$ of the boundary random walk $X^N$ at origin, defined via its generator $\hat{\msf{L}}_N$ as in~\eqref{eq:discretegenerator} asymptotically behaves as
\begin{equation}\label{eq:localTimeBound}
	\Expec{\mf{L}_N^{X^N}(0)}{}{}\,\lesssim O(N^{1/2 + \beta}).
\end{equation}    
\end{corollary}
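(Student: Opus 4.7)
The plan is to deduce Corollary~\ref{localTimeBound} directly from Theorem~\ref{thm:localTime} by identifying $X^N$ with a variant of $X^p$ in which the holding probability at the origin has been modified to accommodate the cemetery state, and then evaluating the resulting bound in the $N$-asymptotic regime.

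First, I would justify the replacement $1-p \longmapsto 1-p-p_\Delta$, with $p=B/N^\beta$ and $p_\Delta = A/N^\alpha$. The key observation is that $\Delta$ is absorbing and accessible only from $0$, so the excursion probabilities $h_j$ entering the renewal equation~\eqref{eq:renewalRelation1} depend only on the simple symmetric dynamics in the bulk together with the fixed transitions $0\to 1$ (probability $p$) and $1\to 0$ (probability $1/2$); none of these are affected by $p_\Delta$. Adjoining $\Delta$ merely redistributes mass away from the event of staying at $0$, so the probability of remaining at $0$ throughout the interval $[0,k]$ becomes $(1-p-p_\Delta)^k$ while the $h_j$ are unchanged. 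Moreover, trajectories that eventually visit $\Delta$ only subtract mass from the expected local time, so the upper bound of Theorem~\ref{thm:localTime} remains valid after the substitution, and the renewal recursion, generating-function manipulation, and singularity analysis leading to~\eqref{GeneralBound} go through verbatim.

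Second, I would plug $p = B/N^\beta$ and $p_\Delta = A/N^\alpha$ into the resulting estimate and read off the asymptotic orders. The leading term obeys
\[
\frac{1}{1-(1-p-p_\Delta)}\,=\,\frac{1}{BN^{-\beta}+AN^{-\alpha}}\,\in\,O(N^\gamma),
\]
with $\gamma=\min\{\alpha,\beta\}$, since the smaller exponent dominates $p+p_\Delta$. Because $(1-p-p_\Delta)^2\to 1$, one also has $p/(1-p-p_\Delta)^2\in O(N^{-\beta})$, which delivers the $O(N^{-\beta})$ contribution coming from the $\zeta(3/2)$ constant and, after bounding $n^{-1/2}\leq N^{-1/2}$ in the diffusive regime $n\gtrsim N$ (for instance $n=\lfloor N^2 t\rfloor$, as used to define $B^N$), the $O(N^{-\beta-1/2})$ contribution from the $2\sqrt{2}/\sqrt{n}$ factor. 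Summing the three pieces yields the claimed bound~\eqref{eq:localTimeBound}.

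The only genuinely delicate point is the first step: confirming that neither the excursion generating function $H$ nor the Catalan-path enumeration underlying it is altered by the presence of the cemetery state, since this is precisely what allows one to reuse the singularity expansion of $F(x)$ unchanged. Once that is verified, the remainder is a routine asymptotic evaluation of the bound furnished by Theorem~\ref{thm:localTime}.
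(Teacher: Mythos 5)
Your proposal is correct and follows essentially the same route as the paper: identify $X^N$ with the $X^p$ walk after replacing the holding probability $1-p$ by $1-p-p_\Delta = 1-\tfrac{B}{N^\beta}-\tfrac{A}{N^\alpha}$ (noting that the absorbing cemetery only removes mass and leaves the excursion probabilities $h_j$ untouched), and then read off the orders $O(N^\gamma)$, $O(N^{-\beta})$, $O(N^{-\beta-1/2})$ from the bound of Theorem~\ref{thm:localTime}. If anything, you are slightly more careful than the paper on the last term, since you make explicit that $n^{-1/2}\lesssim N^{-1/2}$ requires $n\gtrsim N$ (e.g.\ $n=\lfloor N^2t\rfloor$), a point the paper leaves implicit.
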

This result states that the expected local time at the origin grows at most on the order of $N^{1/2 + \beta}$, where $\beta$ is a parameter that influences the transition probabilities at the boundary. This upper bound provides insight into how frequently the boundary random walk visits the origin as the number of steps $N$ increases, which is crucial for understanding the long-term behavior of such stochastic processes.

\section{Tightness of the $(\alpha,\beta,A,B)$-boundary random walks}\label{sec:3}
This section will be devoted to prove the tightness of the sequence of the rescaled random walks $\left\{B_{t}^{N} : t\in[0,T]\right\}_{N \in \bb{N}}$ in the Skorokhod space $\mc{D}([0,T],\bb{G})$ endowed with the $J_1$-topology. We will use Aldous's criterion~\cite[Theorem 1]{aldous1978} to establish tightness. That is, we will show that $B^N$ satisfies the following conditions
\begin{enumerate}
    \item \label{en:tightness1} For any fixed $t$, the sequence of processes $\{B_t^N : N \geq 0\}$ is tight.
    \item\label{en:tightness2} Let $\tau$ be an arbitrary $\mc{G}_t^N$-stopping time, where $\mc{G}_t^N = \mc{F}_{\lfloor N^2 t \rfloor}^{X^{N}}$. For any $\eps > 0$
\begin{equation*}
    \lim\limits_{h\downarrow 0}\limsup\limits_{N \to \infty}\Prob{}{|B_{\tau + h}^N - B_\tau^N| > \eps} \,=\,0
\end{equation*}
\end{enumerate}

We first prove an auxiliary lemma that will be used to show the tightness of the sequence of processes $\{B^N : N \in \bb{N}\}$.

We begin with a basic result that will be used in all cases. Let $g : \{\Delta\}\cup\bb{N}_0 \to \bb{R}$ be a function and $X^{N}$ a simple random walk with boundary conditions at zero over $\{\Delta\}\cup\bb{N}_0$. 

Define
\begin{equation}\label{eq:discreteMartingale}
    \mc{M}_k^{N,g}\,:=\,\g{X_k^{N}}\,-\,\g{X_0^{N}}\,-\,\sum_{j=0}^{k-1}\hat{\msf{L}}_N\g{X_j^N}.
\end{equation}
\begin{lemma}\label{lem:discreteMartingale}
The process $\mc{M}_k^{N,g}$ defined in~\eqref{eq:discreteMartingale} is a Martingale with respect to the filtration $(\mc{F}_n^{X^{N}} : n \geq 1)$.
\end{lemma}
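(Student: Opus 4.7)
The plan is a direct Dynkin-type verification: compute the conditional increment of $\mc{M}_k^{N,g}$ and show it vanishes. Adaptedness of $(\mc{M}_k^{N,g})_{k\ge 0}$ to the filtration $(\mc{F}_n^{X^N})_{n\ge 1}$ is immediate from the representation \eqref{eq:discreteMartingale}, since each term is a Borel function of $X_0^N,\ldots,X_k^N$. Integrability is automatic once one notes that, on any finite horizon, the walk $X^N$ visits only a finite portion of $\tilde{\bb{G}}$ (jumps have size one and $\Delta$ is absorbing), so both $g(X_k^N)$ and $\hat{\msf{L}}_N g(X_j^N)$ are bounded for $j \leq k$.

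From \eqref{eq:discreteMartingale} the one-step increment telescopes to
\[
\mc{M}_{k+1}^{N,g} - \mc{M}_k^{N,g} \,=\, g(X_{k+1}^{N}) \,-\, g(X_k^{N}) \,-\, \hat{\msf{L}}_N g(X_k^{N}).
\]
Applying the Markov property of $X^N$ to the first two terms reduces the martingale identity to the pointwise statement
\[
\mathbb{E}\bigl[g(X_{k+1}^{N}) - g(X_k^{N}) \,\bigm|\, X_k^N = x\bigr] \,=\, \hat{\msf{L}}_N g(x)
\]
for every $x \in \tilde{\bb{G}}$, which is the content of the construction: $\hat{\msf{L}}_N$ is the one-step expected-increment operator of $X^N$.

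The verification follows the two branches of \eqref{eq:discretegenerator} together with Figure~\ref{GF1}. For $x \in \bb{N}$ the walk moves to $x \pm 1$ with probability $\frac{1}{2}$ each, so the left-hand side equals $\frac{1}{2}[g(x+1)+g(x-1)-2g(x)]$, matching the first branch. For $x = 0$, the one-step law places masses $A/N^\alpha$, $B/N^\beta$, and $1 - A/N^\alpha - B/N^\beta$ on $\Delta$, $1$, and $0$ respectively, and a direct computation reproduces the second branch. For $x = \Delta$, absorption forces $X_{k+1}^N = \Delta$, so both sides are zero.

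I do not expect any real obstacle here: the statement is the standard assertion that the compensator of $g(X_k^N)$ is $\sum_{j<k} \hat{\msf{L}}_N g(X_j^N)$, and the only convention worth flagging is that $\hat{\msf{L}}_N$ in \eqref{eq:discretegenerator} is a discrete-time one-step increment operator, so the compensator appears as a Riemann sum rather than an integral. The real work of the section, for which this lemma is merely the algebraic starting point, will lie in selecting good test functions $g$ and extracting useful estimates from the martingale $\mc{M}_k^{N,g}$ in the subsequent tightness and characterization arguments.
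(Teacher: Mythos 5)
Your proof is correct and follows essentially the same route as the paper: telescope the increment to $g(X_{k+1}^N)-g(X_k^N)-\hat{\msf{L}}_Ng(X_k^N)$ and observe that the conditional expectation of the first two terms given $\mc{F}_k^{X^N}$ is exactly $\hat{\msf{L}}_Ng(X_k^N)$ by the definition of the one-step generator. Your explicit case-by-case check of the branches of \eqref{eq:discretegenerator} (including $x=\Delta$) and the remarks on adaptedness and integrability are more detail than the paper provides, but the argument is the same.
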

\begin{proof}
Fix $N,k\in \bb{N}$. Thus
\begin{equation*}
\begin{split}
\CExpec{\mc{M}_{k+1}^{N,g}\,-\,\mc{M}_{k}^{N,g}}{}{\mc{F}_k^{X^{N}}}\,&=\,\CExpec{\Bigg(\g{X_{k+1}^{N}}\,-\,\g{X_0^{N}}\,-\,\sum_{j=0}^k\hat{\msf{L}}_N\g{X_j}\Bigg)}{}{\mc{F}_k^{X^N}}\\
&\quad-\,\CExpec{\Bigg(\g{X_k^{N}}\,-\,\g{X_0^{N}}\,-\,\sum_{j=0}^{k-1}\hat{\msf{L}}_N\g{X_j^N}\Bigg)}{}{\mc{F}_k^{X^N}}\\
&=\,\CExpec{\g{X_{k+1}^N}\,-\,\g{X_k^N}}{}{\mc{F}_k^{X^N}}\,-\,\CExpec{\hat{\msf{L}}_N\g{X_k^{N}}}{}{\mc{F}_k^{X^N}}\\
&=\,0.
\end{split}
\end{equation*}
Therefore, $\mc{M}_k^{N,g}$ is a martingale with respect to the filtration $(\mc{F}_n^{X^{N}} : n \geq 1)$.
\end{proof}

We now show that the sequence of processes $\{B^N : N \in \bb{N}\}$ is tight in the Skorokhod space $\mc{D}([0,T],\bb{G})$ endowed with the $J_1$-topology. To this end, we will show that criterions~\ref{en:tightness1} and \ref{en:tightness2} holds. 

\begin{lemma}\label{lem:tight}
Assume that
\begin{equation}\label{eq:condTIGHT}
    \sup_{N\in\bb{N}}\Expec{\left|B_{0}^{N}\right|^2}{}\,<\,\infty\,.
\end{equation}
Then, for each $T > 0$, the rescaled boundary random walk $\{B^{N} : N \in \bb{N} \}$ is tight in the Skorokhod space $\mc{D}([0,T], \bb{G})$, endowed with the $J_1$-topology.
\end{lemma}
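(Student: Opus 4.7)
The plan is to verify the two conditions of Aldous's criterion stated immediately before the lemma. Both conditions will rely on second-moment estimates obtained from Lemma~\ref{lem:discreteMartingale} applied to polynomial test functions, combined with the boundary local-time bound in Corollary~\ref{localTimeBound}.

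For condition~\eqref{en:tightness1}, I would take the test function $g(x)=x^2$ on $\bb{N}_0$ and set $g(\Delta)=0$, which is consistent with absorption at the cemetery being permanent. A short computation gives $\hat{\msf{L}}_N g(x) = 1$ for every $x\in\bb{N}$ and $\hat{\msf{L}}_N g(0) = B/N^\beta$, so $\hat{\msf{L}}_N g \le 1$ pointwise for $N$ large. Lemma~\ref{lem:discreteMartingale} applied at time $\lfloor N^2 t\rfloor$, after taking expectations, then yields
\begin{equation*}
\Expec{g(X^N_{\lfloor N^2 t\rfloor})}{}{}\,\le\,\Expec{g(X_0^N)}{}{}\,+\,\lfloor N^2 t\rfloor.
\end{equation*}
Dividing by $N^2$ and invoking hypothesis~\eqref{eq:condTIGHT} produces a uniform bound $\sup_{N}\Expec{|B_t^N|^2}{}{}<\infty$. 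Since sets of the form $[0,M]\cup\{\Delta\}$ are compact in $\bb{G}$, Markov's inequality delivers tightness of the one-dimensional marginals.

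For condition~\eqref{en:tightness2}, I would fix a $\mc{G}_t^N$-stopping time $\tau$ and apply Lemma~\ref{lem:discreteMartingale} to the linear test function $g(x)=x$ (with $g(\Delta)=0$), whose compensator vanishes on $\bb{N}$ and equals $B/N^\beta$ at the origin. Optional stopping between $\lfloor N^2\tau\rfloor$ and $\lfloor N^2(\tau+h)\rfloor$ will decompose the increment of $X^N$ into a martingale piece plus a drift proportional to the local-time increment at the origin. Since the one-step displacements of $X^N$ are bounded by one, the martingale piece has conditional quadratic variation at most the number of steps, so after dividing by $N$ its $L^2$-norm is bounded by $\sqrt{h}+O(1/N)$. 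The drift contribution, once rescaled by $B/N^{\beta+1}$ and combined with the bound $O(N^\gamma)$ from Corollary~\ref{localTimeBound}, will be of order $O(N^{\gamma-\beta-1})=o_N(1)$, since $\gamma=\min(\alpha,\beta)\le\beta$. A parallel estimate controls the probability of the walk being absorbed into $\Delta$ during the window, bounded by $A/N^\alpha$ times the expected local-time increment, which vanishes again by Corollary~\ref{localTimeBound}. Combining these pieces through Markov's inequality will close condition~\eqref{en:tightness2}.

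The main obstacle will be the presence of the cemetery state $\Delta$: a single transition $0\mapsto\Delta$ represents a jump of size at least one in the metric on $\bb{G}$, so condition~\eqref{en:tightness2} cannot be verified trajectory by trajectory near the boundary. The resolution is to show that the probability of absorption within a short stopping-time window is small uniformly in $N$, which is precisely what the sharp local-time asymptotics of Section~\ref{sec:5} provide. Once that contribution is controlled, the remainder of the argument is a routine application of Doob's inequality, and tightness in the $J_1$-Skorokhod topology then follows directly from Aldous's criterion.
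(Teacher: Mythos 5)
Your overall route is the same as the paper's: both Aldous conditions are verified with the martingale decomposition of Lemma~\ref{lem:discreteMartingale}, using $g(x)=x^2$ (whose discrete compensator is $\one_{[x>0]}+\tfrac{B}{N^{\beta}}\one_{[x=0]}\le 1+B$) for the one-dimensional marginals and $g(x)=x$ (compensator $\tfrac{B}{N^{\beta}}\one_{[x=0]}$) plus Chebyshev for the increment condition. The one place you genuinely diverge is the drift term in condition~(2): you control the sum of $\tfrac{B}{N^{\beta}}\one_{[X_j^N=0]}$ via the local-time bound of Corollary~\ref{localTimeBound}, whereas the paper simply bounds the indicator by $1$ and counts the $hN^{2}$ steps, getting a term of order $h^{2}N^{-2\beta}$ which already vanishes. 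Both closes work; the local-time estimate buys nothing here and is not needed for tightness (the paper reserves it for the characterization of limit points in Section~\ref{sec:4}).

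The part of your argument that does not close as written is the treatment of absorption into $\Delta$. You are right that this is a real issue --- a transition $0\mapsto\Delta$ is a jump of metric size at least $1$, and the paper's proof silently suppresses it by measuring increments through $g(x)=x$ with $g(\Delta)=0$, which treats the cemetery as the origin. But your proposed fix does not work: Corollary~\ref{localTimeBound} bounds the \emph{total} expected local time by $O(N^{\gamma})$ with $\gamma=\min\{\alpha,\beta\}$, so the absorption probability you obtain is $\tfrac{A}{N^{\alpha}}\cdot O(N^{\gamma})=O(N^{\gamma-\alpha})$. This is $O(1)$, not $o(1)$, whenever $\alpha\le\beta$ (e.g.\ the exponential-holding regime with $\beta\ge 2$), which is consistent with the fact that these processes really are killed with positive probability on $[0,T]$. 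What Aldous actually requires is smallness of the absorption probability over a window of length $h$ after a stopping time, in the iterated limit $\limsup_{N}$ then $h\downarrow 0$; that needs an estimate on the local-time \emph{increment} over $[\tau,\tau+h]$ (of order $\sqrt{h}\,N^{\gamma}$, say), which Corollary~\ref{localTimeBound} does not provide. So either supply such a window-localized bound, or follow the paper and phrase condition~(2) for the coordinate functional with the convention $g(\Delta)=0$ --- but then say explicitly that this is the sense in which increments are being measured.
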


\begin{proof}
 Let us first show that $B_{t}^{N}$ is tight for each $t \in [0,T]$. From Lemma~\ref{lem:discreteMartingale}, since $\mc{M}_k^{N,g}$ is a martingale for any nondecreasing nonnegative function $g$. By applying Markov's inequality follows that for any $M>0$,
\begin{equation}\label{eq:tight1.1}
\begin{split}
\Prob{}{B_t^N\,>\,M}\,&\leq\,\frac{1}{\g{M}}\Expec{\g{B_t^N}}{}\\
&\,=\,\frac{1}{\g{M}}\left[\Expec{g(B_{0}^{N})}{}\,+\,\sum_{j=0}^{\lfloor N^2t  \rfloor -1 }\Expec{\hat{\msf{L}}_N\gn{g}{\pfrac{X_j^N}{N}}{}}{}\right]
\end{split}
\end{equation}
In particular, consider $g(x) = x^2$. Since it is continuous and $g(0) = 0$, by computing it from the definition~\eqref{eq:discretegenerator}, we obtain the following expression for $\hat{\msf{L}}_Ng$
\begin{equation}\label{eq:tight1.2}
\begin{split}
\hat{\msf{L}}_N\gn{g}{\pfrac{x}{N}}\,&=\,\frac{1}{2}\left(\g{\pfrac{x+1}{N}}\,+\,\g{\pfrac{x-1}{N}}\,-\,2\g{\pfrac{x}{N}}\right)\one_{[x > 0]}\,+\,\frac{B}{N^{\beta}}\g{1}\one_{[x=0]}\\
&=\,\frac{1}{N^2}\one_{[x>0]}\,+\,\frac{B}{N^{2 + \beta}}\one_{[x=0]}\\
&\,\leq\,\frac{1}{N^2}(1\,+B)\,.
\end{split}
\end{equation}

Thus, by replacing the upper bound~\eqref{eq:tight1.2} into equation~\eqref{eq:tight1.1}, we obtain
\begin{equation*}
    \Prob{}{|B_t^N|\,>\,M}\,\lesssim\,\frac{1}{M^2}\left[\Expec{\left(B_0^N\right)^2}{}\,+\,N^2t\frac{1}{N^2}(1\,+\,B)\right]
\end{equation*}
From the asumption~\eqref{eq:condTIGHT}, we obtain that right-hand side is bounded, and hence, by choosing $M$ sufficiently large, the first criterion is satisfied.

For the second condition of Aldous' criterion, let $\tau$ be a bounded stopping time and $T > 0$. Note that for each $h \leq T - \tau$, from an easy application of Markov's inequality, one can obtain that
\begin{equation}\label{eq:tight2.1}
\begin{split}
\Prob{}{|B_{\tau + h}^N\,-\,B_\tau^N|\,\geq\,\eps}\,&\leq\,\frac{1}{\eps^2}\left[\Expec{|B_{\tau + h}^N\,-\,B_\tau^N|^2}{}\right]\\
&=\,\frac{1}{\eps^2N^2}\Expec{\left|X_{\lfloor N^2(\tau + h)\rfloor}^N\,-\,X_{\lfloor N^2\tau \rfloor}^N\right|^2}{}
\end{split}
\end{equation}
Thus, from equation~\eqref{eq:discreteMartingale}, we have that
\begin{equation}\label{eq:tight2.2}
\begin{split}
\left(X_{\lfloor (\tau + h)N^2\rfloor}^N\,-\,X_{\lfloor\tau N^2\rfloor}^N\right)^2\,&=\,\Bigg(\mc{M}_{\lfloor (\tau + h)N^2\rfloor}^{N,g}\,-\,\mc{M}_{\lfloor \tau N^2\rfloor}^{N,g}\,+\,\sum_{j = \lfloor \tau N^2\rfloor}^{\lfloor (\tau + h)N^2\rfloor - 1}\hat{\msf{L}}_N g\left(\pfrac{X_j^N}{N}\right)\Bigg)^2\\
&\leq\,2\left(\mc{M}_{\lfloor (\tau + h)N^2\rfloor}^{N,g}\,-\,\mc{M}_{\lfloor \tau N^2\rfloor}^{N,g}\right)^2\,+\,2\Bigg(\sum_{j = \lfloor \tau N^2\rfloor}^{\lfloor (\tau + h)N^2\rfloor - 1}\hat{\msf{L}}_N g\Big(\pfrac{X_j^N}{N}\Big)\Bigg)^2
\end{split}
\end{equation}
Recalling the definition of the discrete generator~\eqref{eq:discretegenerator}, applying it to $g(x) = x$ as we have done in~\eqref{eq:tight1.2}, it yields
{\setlength{\abovedisplayskip}{8pt}
	\setlength{\belowdisplayskip}{8pt}
\begin{equation}\label{eq:tight2.3}
	\hat{\msf{L}}_N \gn{g}{\pfrac{x}{N}} \,=\pfrac{B}{N^{-(1 + \beta)}}\one_{[x=0]}\,\leq\,\of{N^{-(1 + \beta)}}\,.
\end{equation}
}
Therefore, from~\eqref{eq:tight2.2} and \eqref{eq:tight2.3} the equation~\eqref{eq:tight2.1} becomes
\begin{equation}\label{eq:tight2.4}
\begin{split}
&\Prob{}{|B_{\tau + h}^N\,-\,B_\tau^N|\,\geq\,\eps}\\
&\overset{\eqref{eq:tight2.2}}{\leq}\frac{1}{\eps^2N^2}\left[2\Expec{\left(\mc{M}_{\lfloor (\tau + h)N^2\rfloor}^{N,g}\,-\,\mc{M}_{\lfloor \tau N^2\rfloor}^{N,g}\right)^2}{}\,+\,2\Expec{\Bigg(\sum_{i = \lfloor \tau N^2\rfloor}^{\lfloor (\tau + h)N^2\rfloor - 1}\hat{\msf{L}}_N\gn{g}{\pfrac{X_j^N}{N}}{}\Bigg)^2}{}\right]\\
&\overset{\eqref{eq:tight2.3}}{\leq}\frac{1}{\eps^2N^2}\left[2\Expec{\left(\mc{M}_{\lfloor (\tau + h)N^2\rfloor}^{N,g}\,-\,\mc{M}_{\lfloor \tau N^2\rfloor}^{N,g}\right)^2}{}\,+\,2h^2N^4\Expec{\bigg(\of{N^{-(1+\beta)}}\one_{[x=0]}\bigg)^2}{}\right]\\
&\leq \frac{1}{\eps^2N^2}\left[2\Expec{\left(\mc{M}_{\lfloor (\tau + h)N^2\rfloor}^{N,g}\,-\,\mc{M}_{\lfloor \tau N^2\rfloor}^{N,g}\right)^2}{}\,+\,2h^2\of{N^{-2(1 + \beta)}}\right]
\end{split}
\end{equation}
We begin by estimating the first term on the right-hand side of Equation~\eqref{eq:tight2.4}, namely the expectation involving the martingale difference. Similarly, from equation~\eqref{eq:tight2.3}, we obtain
\begin{equation}\label{eq:tight2.5}
\begin{split}
&\Expec{\left(\mc{M}_{\lfloor (\tau + h)N^2\rfloor}^{N,g}\,-\,\mc{M}_{\lfloor \tau N^2\rfloor}^{N,g}\right)^2}{}\,=\,\Expec{\Bigg(\sum_{j=\lfloor \tau N^2\rfloor}^{\lfloor (\tau + h)N^2\rfloor - 1}\mc{M}_{j+1}^{N,g}\,-\,\mc{M}_{j}^{N,g}\Bigg)^2}{}\\
&\quad=\,\sum_{j=\lfloor \tau N^2\rfloor}^{\lfloor (\tau + h)N^2\rfloor - 1}\Expec{\left(\mc{M}_{j+1}^{N,g}\,-\,\mc{M}_{j}^{N,g}\right)^2}{}\\
&\quad=\,\sum_{j=\lfloor \tau N^2\rfloor}^{\lfloor (\tau + h)N^2\rfloor - 1}\Expec{\left(X_{j+1}^N\,-\,X_j^N\,-\,\hat{\msf{L}}_N\gn{g}{X_j^N}{}\right)^2}{}\\
&\quad\leq\,\sum_{j=\lfloor \tau N^2\rfloor}^{\lfloor (\tau + h)N^2\rfloor - 1}2\mathbb{E}\left[\CExpec{\left(X_{j+1}^N\,-\,X_j^N\right)^2}{}{\mc{F}_j^X}\right]\,+\,2\Expec{\left(\hat{\msf{L}}_N\gn{g}{X_j^N}{}\right)^2}{}\\
&\quad\leq\sum_{j=\lfloor \tau N^2\rfloor}^{\lfloor (\tau + h)N^2\rfloor - 1}2\mathbb{E}\left[\one_{[X_j^N \geq 1]}\,+\,\frac{B}{N^{\beta}}\one_{[X_j^N = 0]}\right]\,+\,\of{N^{-2(1 + \beta)}}\one_{[k=0]}\\
&\quad\leq hN^2\left[2 \,+\,\of{N^{-\beta}}\,+\,\of{N^{-2(1 + \beta)}}\right]\,,
\end{split}
\end{equation}
and the penultimate inequality comes from the fact that any function evaluated at $\Delta$ vanishes. Finally, from equations~\eqref{eq:tight2.4} and \eqref{eq:tight2.5} we bound~\eqref{eq:tight2.1} as follows
\begin{equation*}
\begin{split}
\Prob{}{|B_{\tau + h}^N\,-\,B_\tau|\,\geq\,\eps}\,&\leq\,\frac{2}{\eps^2}h\left[2\,+\,\of{N^{-\beta}}\,+\,\of{N^{-2(1 + \beta)}}\right]\,+\frac{2h^2}{\eps^2}\of{N^{-2\beta}}\,.
\end{split}
\end{equation*}
Since $\beta > 0$, making $N \rightarrow \infty$ and then $h\to 0$, the second condition is satisfied. Applying the Aldous' criterion~\cite{aldous1978}, it follows that $\{B_{t}^{N} : N \in \bb{N} \}$ is tight in the Skorokhod space $\mc{D}([0,T], \bb{G})$, endowed with the $J_1$-topology.
\end{proof}

\section{Characterization of the limit points}\label{sec:4}
This section is devoted to characterize the limit points of the family $\{X^N:\alpha,\beta,A,B > 0\}$  of random walks using martingale problem techniques. It is divided into seven subsections, each corresponding to a different Brownian-type limiting process. In each case, having established the existence of a limiting measure in Section~\ref{sec:3} via Aldous's tightness criterion, it remains to characterize the limit through the convergence of the associated martingales.

Having established tightness in Section~\ref{sec:3}, we now focus on the identification of the limit points via martingale convergence. Let $B$ be a limit point of $B^{N}$, as provided by Lemma~\ref{lem:tight}. We start by considering that $\mc{L}$ denotes the general generator of the limit process. For each $f \in \mf{D}(\mc{L})$, define
\begin{equation}\label{eq:martingale}
    \mc{M}_t^f\,:=\,\f{B_t}\,-\,\f{B_0}\,-\,\int_0^t\mc{L}\f{B_s}\dd s\,.
\end{equation}

In what follows, we will specify the operator $\mc{L}$ and the associated process $B$ for each case of Brownian-type processes on the half-line. To avoid overloading notation, we will reuse the symbols $\mc{M}_t^f$, $B$ and $\mc{L}$ in each of the following subsections. Additionally, For any $t \in [0,T]$ we define $k_{t} := \lfloor N^2 t \rfloor$

Observe now that, given any $f \in \mc{C}_0^2(\bb{G})$ -- twice continuously differentiable function vanishing at infinity -- for sake of simplicity we denote by $f_N := \pi_Nf$ where 
\begin{equation*}
    \pi_N\,:\,\mc{C}_0(\bb{G}) \to \mc{C}(\tilde{\bb{G}}_N)
\end{equation*}
is the natural projection, i.e., the restriction of $f$ to the lattice $\tilde{\bb{G}}_N$,
\begin{equation*}
    \gn{f}{x}{N}\,=
    \begin{cases}
        \f{\pfrac{x}{N}},\quad&\text{ if } \pfrac{x}{N} \in \tilde{\bb{G}}_N\setminus\{\Delta\}\,,\\
        0, &\text{ if } x = \Delta\,.
    \end{cases}
\end{equation*} 
It is noteworthy that $\msf{L}_N : \mc{C}_0(\tilde{\bb{G}}_N) \to \mc{C}_0(\tilde{\bb{G}}_N)$ can be seen in terms of $\hat{\msf{L}}_N : \mc{C}_0(\tilde{\bb{G}}) \to \mc{C}_0(\tilde{\bb{G}})$ as following
\begin{equation}\label{eqdef:generatorXB}
    \msf{L}_N\gn{f}{x}{N}\,=
    \begin{cases}
        N^2\hat{\msf{L}}_N\gn{f}{x}{}, &\text{ if } x \in \tilde{\bb{G}}\setminus\{\Delta\}\,,\\
        0, &\text{ if } x = \Delta\,.
    \end{cases}
\end{equation}
Therefore, for any $f\in \mc{C}_0^2(\bb{G})$, and recalling the definition of $\msf{L}_N$ from~\eqref{eq:boundaryRWgen}, it follows from a Taylor expansion that
\begin{equation}\label{eq:Brownian.generator}
\begin{split}
\msf{L}_N\f{\pfrac{x}{N}}\,&=\,\frac{1}{2}\Delta^N\f{\pfrac{x}{N}}\one_{[x > 0]}\,+\,\left[\frac{B}{N^{\beta - 1}}\nabla^N\f{0} - \frac{A}{N^{\alpha - 2}}\f{0}\right]\one_{[x=0]}\\
&\,=\left[\frac{1}{2}\gn{f''}{\pfrac{x}{N}}{}\,+\,\of{1}\right]\one_{[x>0]}\,+\,\left[\frac{B}{N^{\beta - 1}}\left[f'(0)\,+\,\of{1}\right] - \frac{A}{N^{\alpha - 2}}\f{0}\right]\one_{[x=0]}
\end{split}
\end{equation}
where $x \in \tilde{\bb{G}}_N\setminus\{\Delta\}$.

\subsection{The mixed Brownian motion as a scaling limit of the $(2,1,A,B)$-boundary random walk} The mixed Brownian motion is a Feller diffusion process on the half-line, which is characterized by the generator $\mc{L} = \frac{1}{2}\frac{d^2}{dx^2}$ and the domain $\mf{D}(\mc{L})$ as follows:
\begin{equation*}
    \mf{D}(\mc{L})\,=\,\left\{f\in\mc{C}_0^2(\bb{G}):f''\in\mc{C}_0([0,\infty),\,\text{ and }\,Af(0)\,-\,Bf'(0)\,+\,\frac{1}{2}f''(0)\,=\,0)\right\}
\end{equation*}

In order to compare the discrete dynamics with the limiting continuous generator, fix an arbitrary test function $f \in \mf{D}(\mc{L})$. For $x \in \bb{G}_N\setminus\{\Delta\}$, recalling the generator of boundary random walk acts as in~\eqref{eq:Brownian.generator}, with parameters $\alpha = 2$ and $\beta = 1$, we obtain
\begin{equation}\label{eq:generatorMBM}
\begin{split}
\msf{L}_N\gn{f}{x}{}\,&=\,\left(\pfrac{1}{2}f''(x)\,+\,\of{1}\right)\one_{[x> 0]}\,+\,\left(Bf'(0)\,-\,A\f{0}\,+\,\of{1}\right)\one_{[x=0]}\\
&=\,\left(\mc{L}\gn{f}{x}{}\,+\,\of{1}\right)\one_{[x> 0]}\,+\,\left(\mc{L}\f{0}\,+\,\of{1}\right)\one_{[x=0]}.
\end{split}
\end{equation}
Taking $N \to \infty$, we have that $\msf{L}_Nf$ converges pointwise to $\mc{L}f$ for all $f \in \mf{D}(\mc{L})$.

\subsubsection{Characterization of limit points}

\begin{lemma}\label{lem:martingaleMBM} 
Let $f \in \mf{D}(\mc{L})$. Then $\mc{M}_{k_{t}}^{N,f} \to \mc{M}_t^f$ as $N \to \infty$ in the Skorokhod space $\mc{D}([0,T], \bb{R})$. 
\end{lemma}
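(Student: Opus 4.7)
The plan is to realize the discrete martingale as a continuous-time functional of the rescaled process $B^N$ and then pass to the limit using the assumed Skorokhod convergence $B^N \Rightarrow B$ combined with a uniform approximation of the discrete generator by the continuous one. First, using the relation~\eqref{eqdef:generatorXB} between $\hat{\msf{L}}_N$ and $\msf{L}_N$ together with the projection $f_N = \pi_N f$, I would rewrite
\begin{equation*}
    \mc{M}^{N,f}_{k_t}\,=\,f(B^N_t)\,-\,f(B^N_0)\,-\,\int_0^t \msf{L}_N f(B^N_s)\,ds\,+\,R_t^N,
\end{equation*}
where $R_t^N$ absorbs the Riemann-sum discretization and the truncation $k_t/N^2 - t$, and is $O(1/N^2)$ uniformly on $[0,T]$. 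By Skorokhod representation, applied along the convergent subsequence provided by Lemma~\ref{lem:tight}, I may realize the processes on a common probability space so that $B^N \to B$ almost surely in $\mc{D}([0,T],\bb{G})$.

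The core step is to establish the uniform bound $\sup_{x\in\bb{G}}|\msf{L}_N f(x) - \mc{L}f(x)| \to 0$. On $(0,\infty)$ this is a second-order Taylor expansion whose remainder is controlled by the modulus of continuity of $f''\in\mc{C}_0([0,\infty),\bb{R})$, which is automatically uniformly continuous on $[0,\infty)$. At the origin the crucial input is the boundary condition $Af(0)-Bf'(0)+\tfrac{1}{2}f''(0)=0$ characterizing $\mf{D}(\mc{L})$: it forces $\mc{L}f(0) = \tfrac{1}{2}f''(0) = Bf'(0) - Af(0)$, which matches precisely the value $\msf{L}_N f(0) = Bf'(0) - Af(0) + o(1)$ read off from~\eqref{eq:generatorMBM}. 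This compatibility also guarantees that $\mc{L}f$ extends continuously across the origin, so $\mc{L}f \in \mc{C}_0(\bb{G})$ and is bounded. Given this, the deterministic estimate
\begin{equation*}
    \Bigl|\int_0^t \msf{L}_N f(B^N_s)\,ds\,-\,\int_0^t \mc{L}f(B^N_s)\,ds\Bigr|\,\leq\,T\,\sup_{x\in\bb{G}}|\msf{L}_N f(x)\,-\,\mc{L}f(x)|
\end{equation*}
vanishes uniformly on $[0,T]$.

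To finish, continuity of $f$ and $\mc{L}f$ on $\bb{G}$ together with $J_1$-convergence $B^N \to B$ yields $f(B^N_\cdot) \to f(B_\cdot)$ and $\mc{L}f(B^N_\cdot) \to \mc{L}f(B_\cdot)$ in $\mc{D}([0,T],\bb{R})$, and continuity of the primitive map $\omega \mapsto \int_0^\cdot \omega(s)\,ds$ from $\mc{D}([0,T],\bb{R})$ to $\mc{C}([0,T],\bb{R})$ upgrades the latter to uniform convergence on $[0,T]$. Combining these three ingredients with the first paragraph gives $\mc{M}^{N,f}_{k_\cdot} \to \mc{M}^f_\cdot$ almost surely in $\mc{D}([0,T],\bb{R})$, and hence in distribution. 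The main obstacle I anticipate is the boundary matching: the entire mixed-boundary behavior of the limit is encoded in the precise cancellation $\msf{L}_N f(0) \to \mc{L}f(0)$, which hinges both on the specific scaling $(\alpha,\beta) = (2,1)$ and on the choice $A = c_1/c_3$, $B = c_2/c_3$. Notably, in this regime the local-time estimate of Corollary~\ref{localTimeBound} is not invoked, since uniform generator convergence bypasses any need to control $\int_0^t \one_{[B^N_s = 0]}\,ds$; it will become essential in the sticky and elastic cases where $\msf{L}_N f$ no longer converges uniformly to $\mc{L}f$ near the origin.
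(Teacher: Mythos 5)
Your proposal is correct and follows essentially the same route as the paper: both decompose the discrete Dynkin martingale into $f(B^N_t)-f(B^N_0)$ minus a Riemann sum for $\int_0^t\mc{L}f(B_s)\,ds$ plus a compensator built from $h_N=\msf{L}_Nf-\mc{L}f$, and both hinge on the same boundary cancellation $\msf{L}_Nf(0)=Bf'(0)-Af(0)+o(1)=\mc{L}f(0)+o(1)$ forced by the domain condition. The only difference is technical rather than conceptual: you dispose of the compensator with a deterministic bound $\sup_x|h_N(x)|\to 0$, while the paper bounds it in probability via Markov's inequality after splitting the sum between the bulk and the origin; and you are right that the local-time estimate of Corollary~\ref{localTimeBound} is not needed in this $(2,1,A,B)$ regime.
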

\begin{proof}
Let first establish the convergence. Fix $f\in\mf{D}(\mc{L})$. Then
\begin{equation*}
	\mc{M}_{k_{t}}^{N,f}\,=\,\gn{f}{\frac{X^N_{k_t}}{N}}{}\,-\,\gn{f}{\frac{X_0^N}{N}}{}\,-\,\sum_{j=0}^{k_{t}-1} \hat{\msf{L}}_{N}\gn{f}{X_j^N}{N}\,.
\end{equation*} 
By the mapping theorem, 
\begin{equation*}
    \gn{f}{\pfrac{X_{k_t}^N}{N}}{} \rightarrow f(B_{t}) \quad\text{ and }\quad \gn{f}{\pfrac{X_0^N}{N}}{} \rightarrow f(B_0^{N})
\end{equation*}
so we only need to care about the sum part. Define 
\begin{equation*}
A_{k_t}^{\hat{\msf{L}}_N}(X^N_{\cdot})\,:=\,\sum_{j=0}^{k_{t}-1}\hat{\msf{L}}_{N}f_{N}\left(X^N_{j}\right)
\end{equation*}
Then, the martingale $\mc{M}_{k_t}^{N,f}$ can be rewritten as
\begin{equation*}
    \mc{M}_{k_{t}}^{N,f}\,=\,f(B_{t}^N)\,-\,f(B_0^N)\,-\,A_{k_t}^{\hat{\msf{L}}_N}(X_{\cdot}^N)\,.
\end{equation*}
Observe from the definition~\eqref{eqdef:generatorXB}, we can rewritten the sum $A_t^N(x)$ as
\begin{equation*}
    A_{k_t}^{\hat{\msf{L}}_N}(X_{\cdot}^N)\,=\,\frac{1}{N^2}A_{k_t}^{\msf{L}_N}\left(B^N_{\pfrac{\cdot}{N^2}}\right)\,.
\end{equation*}
Writing $s_{j} = j/N^2$ and from above identity, we obtain
\begin{equation}\label{eq:compensador}
	A_{k_t}^{\hat{\msf{L}}_N}(X_{\cdot}^N)\;=\;\sum_{j=0}^{k_{t}-1}\mc{L}f(B_{s_{j}}^N) \Delta s_{j}\,+\,\sum_{j=0}^{k_{t}-1}h_N(B_{s_j}^{N})\frac{1}{N^2}\,,
\end{equation}
where $h_N(y) := \msf{L}_Nf(y) - \mc{L}f(y)$ for $y \in \tilde{\bb{G}}_N\setminus\{\Delta\}$. The second term in~\eqref{eq:compensador} will be referred to as the {\it compensator}. 

It is noteworthy that the first sum in equation~\eqref{eq:compensador} is a Riemann sum for the integral $\int_{0}^{t}\mc{L}f(B_s)\dd s$. Since $B^{N} \rightarrow B$ and $\mc{L}f$ is continuous, this term converges to the integral. It therefore remains to analyze the compensator. 

Since $f \in \mf{D}(\mc{L})$, and from equation~\eqref{eq:generatorMBM}, we have that
\begin{equation}\label{eq:generatorMBM2}
\begin{split}
    h_N(y)\,&=\,\left(\msf{L}_Nf_N(y)\,-\,\mc{L}f_N(y)\,+\,\of{N^2}\right)\one_{[y > 0]}\\
    &\qquad+\,\left(Bf'(0)\,-\,A\f{0}\,-\,\mc{L}f_N(0)\,+\,\of{N}\right)\one_{[y=0]}\\
    &=\,\of{1}\one_{[y > 0]}\,+\,\of{1}\one_{[y = 0]}\,.
\end{split}
\end{equation}
Thus, the compensator can be rewritten as
\begin{equation}\label{eq:compensator2}
\begin{split}
	\sum_{j=0}^{k_{t}-1} h_N(B_{s_j}^{N})\frac{1}{N^2}\,&=\,\sum_{j=0}^{k_{t}-1}h_N(B_{s_j}^{N})\frac{1}{N^2} \one_{[B_{s_j}^N > 0]}\,+\,\sum_{j=0}^{k_{t}-1}h_N(0)\frac{1}{N^2} \one_{[B_{s_j}^N = 0]}\\
    &=\,\sum_{j=0}^{k_{t}-1}\sum_{y\in \tilde{\bb{G}}_N\setminus\{0,\Delta\}}h_N(y)\frac{1}{N^2} \one_{[B_{s_j}^N = y ]}\,+\,\sum_{j=0}^{k_{t}-1}h_N(0)\frac{1}{N^2} \one_{[B_{s_j}^N = 0]}
\end{split}
\end{equation}
Then, applying Markov's inequality we obtain
\begin{equation}\label{eq:compensator3}
\begin{split}
	&\Prob{}{\left|\sum_{j=0}^{k_{t}-1}h_N(B_{s_j}^{N})\frac{1}{N^2} \right|\,>\,\eps}\,\leq\,\frac{1}{\eps}\Expec{\left|\sum_{j=0}^{k_{t}-1}h_N(B_{s_j}^{N})\frac{1}{N^2} \right|}{}\\
    &\overset{\eqref{eq:compensator2}}{\leq}\,\frac{1}{\eps}\left[\sum_{j=0}^{k_{t}-1}\Expec{\Big|h_N(B_{s_j}^{N})\pfrac{1}{N^2}\one_{[B_{s_j}^N > 0]}\Big|}{}\,+\,\sum_{j=0}^{k_{t}-1}\Expec{\Big|h_N(0)\pfrac{1}{N^2} \one_{[B_{s_j}^N = 0]}\Big|}{}\right]\\
    &\leq\,\frac{1}{\eps}\left[\sum_{j=0}^{k_{t}-1}\sum_{y \in \tilde{\bb{G}}_N\setminus\{0,\Delta\}}\big|h_N(y)\big|\pfrac{1}{N^2}\Prob{}{B_{s_j}^N = y}\,+\,\sum_{j=0}^{k_{t}-1}\big|h_N(0)\big|\pfrac{1}{N^2}\Prob{}{B_{s_j}^N = 0}\right]\\
    &\overset{\eqref{eq:generatorMBM2}}{\lesssim}\frac{1}{\eps}\of{1}\sum_{j=0}^{k_t - 1}\pfrac{1}{N^2}\,+\,\frac{1}{\eps}\of{1}\sum_{j=0}^{k_t - 1}\pfrac{1}{N^2}\Prob{}{B_{s_j}^N = 0}\\
    &\leq\frac{1}{\eps}\left[\of{1}\frac{k_t}{N^2}\,+\,\of{1}\frac{k_t}{N^2}\right]\\
    &\,\lesssim\frac{1}{\eps} \left[\of{1}\,+\,\of{1}\right]\,.
\end{split}
\end{equation}
This shows that
\begin{equation*}
    \sum_{j=0}^{k_{t}-1}h_N(B_{s_j}^{N})\frac{1}{N^2}\,\rightarrow\,0
\end{equation*}
in probability and we may conclude that $\mc{M}_{k_{t}}^{N,f} \to \mc{M}_t^f$ as $N \to \infty$ in the Skorokhod space $\mc{D}([0,T], \bb{R})$.
\end{proof}

The next result is general in the sense of it holds for each of the limit processes we will consider in this section. It establishes that the limit process $\mc{M}_t^f$ is a martingale with respect to the filtration generated by the limit process $\msf{W}$.
\begin{lemma}\label{lem:UnifMartingale}
Let $f \in \mf{D}(\mc{L})$ on the domain of the general Brownian motion $\msf{W}$. Then $\mc{M}_t^f$ is a martingale with respect to the filtration generated by the general Brownian motion $\msf{W}$.
\end{lemma}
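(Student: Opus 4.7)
The plan is to transfer the discrete martingale identity of Lemma~\ref{lem:discreteMartingale} to the limit by exploiting the convergence $\mc{M}_{k_t}^{N,f} \to \mc{M}_t^f$ established in Lemma~\ref{lem:martingaleMBM} (and its analogues for the other six regimes). Concretely, I fix $0 \le s < t \le T$, times $0 \le s_1 < \cdots < s_k \le s$, and a bounded continuous function $\Phi : \bb{G}^k \to \bb{R}$. Since $\mc{M}_j^{N,f}$ is an $(\mc{F}_j^{X^N})$-martingale and each $B_{s_i}^N$ is $\mc{F}_{k_s}^{X^N}$-measurable, the identity
\begin{equation*}
    \E\!\left[\bigl(\mc{M}_{k_t}^{N,f} - \mc{M}_{k_s}^{N,f}\bigr)\,\Phi\bigl(B_{s_1}^N, \ldots, B_{s_k}^N\bigr)\right] \,=\, 0
\end{equation*}
holds for every $N \in \bb{N}$. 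Passing $N \to \infty$ and invoking a monotone class argument will yield $\E\bigl[\mc{M}_t^f \mid \sigma(\msf{W}_u : u \le s)\bigr] = \mc{M}_s^f$, which is the desired martingale property.

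Two ingredients are needed to justify the limit passage. The first is joint convergence in distribution of the vector $\bigl(B_{s_1}^N, \ldots, B_{s_k}^N, \mc{M}_{k_s}^{N,f}, \mc{M}_{k_t}^{N,f}\bigr)$ to $\bigl(\msf{W}_{s_1}, \ldots, \msf{W}_{s_k}, \mc{M}_s^f, \mc{M}_t^f\bigr)$; this follows from the tightness of $B^N$ in Lemma~\ref{lem:tight}, the convergence of the compensated sequence in Lemma~\ref{lem:martingaleMBM}, and the continuous mapping theorem applied to the evaluation functionals $\omega \mapsto \omega(s_i)$ at continuity points of $\msf{W}$, whose complement has zero probability since the limiting paths are continuous in every non-killed regime. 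Skorokhod's representation theorem then lets me realize all convergences almost surely on a common probability space.

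The second ingredient, which is the real technical core, is uniform integrability of $\{\mc{M}_{k_t}^{N,f}\Phi(\cdots)\}_N$. Since $\Phi$ is bounded, this reduces to uniform integrability of the martingales themselves, and I intend to establish it via a uniform $L^2$-bound. Writing $\mc{M}_{k_t}^{N,f}$ as a telescoping sum of orthogonal increments and reproducing the computation of~\eqref{eq:tight2.5} with $g$ replaced by $f$, the bulk contribution is $O(k_t/N^2) = O(t)$ (using boundedness of $f'$ to control $|f(X_{k+1}/N) - f(X_k/N)|$), while the boundary contribution is proportional to $\bigl(B/N^\beta + A/N^\alpha\bigr)^2 \cdot \E\!\bigl[\mf{L}_{k_t}^{X^N}(0)\bigr]$. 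Corollary~\ref{localTimeBound} bounds the expected local time by $O(N^{\min(\alpha,\beta)})$, so the boundary term is at most $O(N^{-\min(\alpha,\beta)})$; combined with the bulk estimate this gives $\sup_N \E\bigl[(\mc{M}_{k_t}^{N,f})^2\bigr] < \infty$, hence uniform integrability.

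The main obstacle is that the boundary contribution to $\hat{\msf{L}}_N f$ scales differently across the seven regimes of Theorem~\ref{thm:main}, so one must verify the uniform $L^2$-bound in each case. The boundedness of $f \in \mc{C}_0^2(\bb{G})$ together with the sharp asymptotics of Corollary~\ref{localTimeBound} were designed precisely to cover all of them uniformly; once this is in place, the identity above closes at the limit and the martingale property follows by the standard monotone class extension to $\sigma(\msf{W}_u : u \le s)$-measurable integrands.
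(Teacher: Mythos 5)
Your proposal is correct and follows essentially the same route as the paper: both arguments reduce the lemma to uniform integrability of the discrete martingales (you via a uniform $L^2$ bound using orthogonal increments and the local-time estimate, the paper via a uniform $L^p$ bound using Minkowski's inequality) combined with the weak convergence $\mc{M}_{k_t}^{N,f}\to\mc{M}_t^f$. The only difference is that the paper delegates the final step to~\cite[Proposition 2.3, page 494]{Ethier1986}, whereas you unwind that citation explicitly through the test-function identity and the monotone class argument.
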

\begin{proof}
By Lemma~\ref{lem:discreteMartingale}, $\mathcal{M}_{k_t}^{N, f}$ is a martingale. To conclude that the limit process $\mathcal{M}_t^f$ is also a martingale, we need to verify that the sequence of martingales
\begin{equation*}
    \left( \mathcal{M}_{k_t}^{N, f_N} \right)_{N \geq 1}
\end{equation*}
is uniformly integrable for each fixed $t$.

A sufficient condition is the uniform boundedness of the $L^p$ norm for some $p > 1$. Let $p \geq 2$ and $f \in C_0^2([0,\infty))$. Observe that  $f$, $f'$ and $f''$ are bounded functions. Thus, from the Minkowski inequality,
\begin{equation*}
\begin{split}
	&\Expec{\left(\mc{M}_{k_t}^{N,f}\right)^p}{}\,=\,\sum_{j=0}^{k_{t}-1}\Expec{\CExpec{\left(\mc{M}_{j+1}^{N,f}-\mc{M}_{j}^{N,f}\right)^p}{}{\mc{F}_j^{X^{N}}}}{}\\
    &\qquad\qquad\leq\,2^p\sum_{j=0}^{k_{t}-1}\Expec{\CExpec{\left(f_N(X_{j+1}^N)-f_{N}(X_{j}^N) \right)^p}{}{}{\mc{F}_j^{X^{N}}}}{}\,+\,2^p\sum_{j=0}^{k_t -1}\Expec{\left(\hat{L}_Nf_N(X_j^N)\right)^p}{}\\ 
    &\qquad\qquad\lesssim\,2^pk_{t}\left[\of{N^{-p}}\,+\,\of{N^{-p\theta}} \right]\\
    &\qquad\qquad\lesssim o(1)
\end{split}
\end{equation*}
where $\theta := \min\{2, \alpha, \beta + 1\}$, proving that any $(\alpha,\beta, A,B)$-boundary random walk with $\alpha \geq \frac{2}{p}$ and $\beta \geq 0$ is uniformly integrable over $L^p$.

The uniform integrability, combined with the weak convergence $\mc{M}_{k_{t}}^{N,f} \to \mc{M}_t^f$, ensures that the limit process $\mc{M}_t^f$ is indeed a martingale with respect to the filtration generated by the limit process $B$~\cite[Proposition 2.3, page 494]{Ethier1986}. 
\end{proof}

\begin{remark}\rm
    It is important to note that the uniform integrability was established for any $p > 1$. This implies that the limit process $\mc{M}_t^f$ is a martingale for any $f \in \mf{D}(\mc{L})$. Consequently, this covers almost all regimes of the parameters $\alpha$ and $\beta$ and therefore, we are able to characterize all limit processes.
\end{remark}

To complete the proof of Theorem~\ref{thm:main}, it remains to characterize the limit point for the other processes. In each of the remaining cases, we simply check that the compensator converges to zero in probability for each choice of $\alpha,\beta, A$ and $B$, with the rest proceeding as in Lemma~\ref{lem:martingaleMBM}.

\subsection{The Sticky Brownian motion as a scaling limit of the $(\alpha,1,A,B)$-random walk with parameters $\alpha > 2$}
The sticky Brownian motion is a Feller diffusion process on the half-line, characterized by the generator $\mc{L} = \pfrac{1}{2}\pfrac{d^2}{dx^2}$ and the domain $\mf{D}(\mc{L})$ as follows:
\begin{equation*}
    \mf{D}(\mc{L})\,=\,\left\{f\in\mc{C}_0^2(\bb{G}):f''\in\mc{C}_0([0,\infty)),\,\text{ and }\,-Bf'(0)\,+\,\frac{1}{2}f''(0)\,=\,0\right\}
\end{equation*}
In order to compare the discrete dynamics with the limiting continuous generator, fix an arbitrary test function $f \in \mf{D}(\mc{L})$. Recalling the generator of boundary random walk~\eqref{eq:Brownian.generator}, we obtain, for $x \in \tilde{\bb{G}}_N\setminus\{\Delta\}$ that
\begin{equation}\label{eq:generatorSBM}
\begin{split}
\msf{L}_N\gn{f}{x}{}\,&=\,\left(\pfrac{1}{2}f''(x)\,+\,\of{1}\right)\one_{[x>0]}\,+\,\left(Bf'(0)\,-\,\pfrac{A}{N^{\alpha - 2}}f(0)\,+\,\of{1}\right)\one_{[x=0]}\\
&=\,\left(\mc{L}\gn{f}{x}{}\,+\,\of{1}\right)\one_{[x > 0]}\,+\,\left(\mc{L}\f{0}\,-\,\pfrac{A}{N^{\alpha - 2}}f(0)\,+\,\of{1}\right)\one_{[x=0]}.
\end{split}
\end{equation}
and converges pointwise to $\mc{L}f$ for all $f \in \mf{D}(\mc{L})$ in the domain of the Sticky Brownian motion as $N \to \infty$.

\subsubsection{Characterization of limit points}
\begin{lemma}\label{lem:characSBM}
Let $f \in \mf{D}(\mc{L})$. Thus $\mc{M}_{k_{t}}^{N,f} \to \mc{M}_t^f$ as $N \to \infty$ in the Skorokhod space $\mc{D}([0,T], \bb{R})$. Moreover, $\mc{M}_t^f$ is a martingale with respect to the filtration generated by $\msf{W}$.
\end{lemma}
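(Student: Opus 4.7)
The approach is to adapt the argument of Lemma~\ref{lem:martingaleMBM} to the sticky regime $\alpha > 2$, $\beta = 1$, since the underlying martingale machinery and the weak convergence framework are identical; only the boundary error term changes.

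First, starting from $\mc{M}_{k_t}^{N,f} = f(B_t^N) - f(B_0^N) - A_{k_t}^{\hat{\msf{L}}_N}(X^N_\cdot)$, the continuous mapping theorem together with $B^N \Rightarrow B$ handles the two endpoint terms. Then, exactly as in equation~\eqref{eq:compensador}, I decompose
\begin{equation*}
    A_{k_t}^{\hat{\msf{L}}_N}(X^N_\cdot) \;=\; \sum_{j=0}^{k_t-1}\mc{L}f(B_{s_j}^N)\,\Delta s_j \,+\, \sum_{j=0}^{k_t-1}h_N(B_{s_j}^N)\frac{1}{N^2},
\end{equation*}
where $h_N := \msf{L}_N f_N - \mc{L}f$. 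Continuity of $\mc{L}f$ combined with $B^N \to B$ in $\mc{D}([0,T],\bb{G})$ ensures that the first sum is a Riemann sum converging to $\int_0^t \mc{L}f(B_s)\,ds$, so the task reduces to verifying that the $h_N$-compensator tends to zero in probability.

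The main new obstacle lies in the boundary contribution. Thanks to~\eqref{eq:generatorSBM}, the interior error $h_N(y)$ for $y > 0$ is still $\of{N^2}$ and is controlled exactly as in Lemma~\ref{lem:martingaleMBM} by $\of{N^2}\cdot k_t/N^2 = o(1)$. However, at the origin the expansion reads
\begin{equation*}
    h_N(0) \;=\; -\,\frac{A}{N^{\alpha-2}}f(0) \,+\, \of{N},
\end{equation*}
and, in contrast to the mixed Brownian case, the leading contribution $-A N^{-(\alpha-2)}f(0)$ is \emph{not} absorbed in $\mc{L}f(0) = Bf'(0)$. The hard step is therefore to prove that the boundary portion
\begin{equation*}
    |h_N(0)|\cdot\frac{1}{N^2}\sum_{j=0}^{k_t - 1}\one_{[B_{s_j}^N = 0]} \;=\; |h_N(0)|\cdot\frac{\mf{L}_{k_t}^{X^N}(0)}{N^2}
\end{equation*}
still vanishes in probability. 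This is precisely where the local time estimate becomes indispensable: with $\gamma = \min\{\alpha,\beta\} = 1$, Corollary~\ref{localTimeBound} yields $\E[\mf{L}_{k_t}^{X^N}(0)] = O(N)$, and combined with $|h_N(0)| = O(N^{-(\alpha-2)})$, Markov's inequality bounds the boundary compensator in expectation by $O(N^{-(\alpha-2)})\cdot O(N)\cdot N^{-2} = O(N^{-(\alpha-1)})$, which vanishes since $\alpha > 2$.

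Finally, the martingale property of $\mc{M}_t^f$ with respect to the filtration generated by $\msf{W}$ is inherited from the discrete martingales $\mc{M}_{k_t}^{N,f}$ via the uniform integrability established in Lemma~\ref{lem:UnifMartingale}, whose hypotheses $\alpha \geq 2/p$, $\beta \geq 0$ are plainly satisfied in this regime (take for instance $p = 2$, so that $2/p = 1 < \alpha$).
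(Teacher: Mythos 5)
Your argument is correct and follows the paper's proof in its overall structure: the decomposition of $A_{k_t}^{\hat{\msf{L}}_N}$ into a Riemann sum plus a compensator, Markov's inequality on the compensator, and Lemma~\ref{lem:UnifMartingale} for the martingale property of the limit. The one genuine difference is how you kill the boundary term. The paper does not use the local time estimate in this case: it bounds $\sum_{j}\one_{[B_{s_j}^N=0]}$ trivially by $k_t\le tN^2$, so that the origin contribution to the compensator is of order $|h_N(0)|=O(N^{-(\alpha-2)})+\of{N}$, which already vanishes because $\alpha>2$ (this is the $\of{N^\gamma}$ term with $\gamma=\min\{1,\alpha-2\}$ in the paper's display). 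You instead invoke Corollary~\ref{localTimeBound} with $\gamma=\min\{\alpha,\beta\}=1$ to get $\E\big[\mf{L}_{k_t}^{X^N}(0)\big]=O(N)$, and hence the sharper rate $O(N^{-(\alpha-1)})$ for the boundary compensator. Both routes are valid; yours is slightly stronger and has the advantage of being uniform with the method the paper reserves for the reflected, elastic and killed regimes, while the paper's crude count is the more economical argument in the sticky regime, where the prefactor $N^{-(\alpha-2)}$ alone already suffices.
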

\begin{proof}
The proof follows the same steps as in Lemma~\ref{lem:martingaleMBM}, however, in that case, the generator~\eqref{eq:generatorSBM} yields the following relation for the compensator~\eqref{eq:generatorMBM2}:
\begin{equation*}
\begin{split}
    h_N(y)\,&=\,\of{1}\one_{[y> 0]}\,-\,\left(\frac{A}{N^{\alpha-2}}f(0)\,-\,\of{1}\right)\one_{[y=0]}.
\end{split}
\end{equation*}
Thus, for this case, the probability of compensator of~\eqref{eq:compensador} is bounded by
\begin{equation*}
    \Prob{}{\left|\sum_{j=0}^{k_{t}-1}h_N(B_{s_j}^{N})\frac{1}{N^2}\right| > \eps} \,\leq\, \frac{1}{\eps}\left[ \of{1}\,+\,\of{N^{-\gamma}}\right]\,,
\end{equation*}
where $\gamma := \min\{1, \alpha - 2\}$. Since $\alpha > 2$, the right-hand side converges to $0$ as $N \to \infty$. Thus, 
\begin{equation*}
    \sum_{j=0}^{k_{t}-1} \frac{h_{N}(B_{s_j}^{N})}{N^2} \to 0
\end{equation*}
in probability. The convergence of the sequence of martingales $\mc{M}_{k_{t}}^{N,f}$ to the martingale $\mc{M}_t^f$ follows from the same arguments as in Lemma~\ref{lem:martingaleMBM} and from Lemma~\ref{lem:UnifMartingale}.
\end{proof}

\subsection{The exponential holding Brownian motion as a scaling limit of the $(2,\beta,A,B)$-random walk with parameters $\beta > 2$}
The exponential holding Brownian motion is a Feller diffusion process on the half-line, characterized by the generator $\mc{L}$ and the domain $\mf{D}(\mc{L})$ as follows:
\begin{equation*}
    \mf{D}(\mc{L})\,=\,\left\{f\in\mc{C}_0^2(\bb{G}):f''\in\mc{C}_0([0,\infty)),\,\text{ and }\,Af(0)\,+\,\frac{1}{2}f''(0)\,=\,0\right\}
\end{equation*}
In order to compare the discrete dynamics with the limiting continuous generator, fix an arbitrary test function $f \in \mf{D}(\mc{L})$. Once again, by~\eqref{eq:Brownian.generator}, we obtain, for $x \in \tilde{\bb{G}}_N\setminus\{\Delta\}$ that
\begin{equation}\label{eq:generatorEHBM}
\begin{split}
\msf{L}_N\gn{f}{x}{}\,&=\,\left[\pfrac{1}{2}f''(x)\,+\,\of{1}\right]\one_{[x>0]}\,+\,\left[\pfrac{B}{2N^{\beta - 1}}f'(0)\,+\,\of{N^{-\beta}}\,-\,A\f{0}\right]\one_{[x=0]}\\
&=\,\left[\mc{L}\gn{f}{x}{}\,+\,\of{1}\right]\one_{[x>0]}\,+\,\left[\pfrac{B}{2N^{\beta - 1}}f'(0)\,+\,\mc{L}\f{0}\,+\,\of{N^{-\beta}}\right]\one_{[x=0]}\,.
\end{split}
\end{equation}
and converges pointwise to $\mc{L}f$ for all $f \in \mf{D}(\mc{L})$ in the domain of the exponential holding Brownian motion as $N \to \infty$.
\subsubsection{Characterization of limit points}
\begin{lemma}\label{lem:characEHBM}
Let $f \in \mf{D}(\mc{L})$. Thus $\mc{M}_{k_{t}}^{N,f} \to \mc{M}_t^f$ as $N \to \infty$ in the Skorokhod space $\mc{D}([0,T], \bb{R})$. Moreover, $\mc{M}_t^f$ is a martingale with respect to the filtration generated by $\msf{W}$.
\end{lemma}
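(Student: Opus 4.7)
The plan is to follow the template established in Lemma~\ref{lem:martingaleMBM} and refined in Lemma~\ref{lem:characSBM}. I would first decompose the discrete martingale as
\[
\mc{M}_{k_t}^{N,f}\,=\,f(B_t^N)\,-\,f(B_0^N)\,-\,\sum_{j=0}^{k_t-1}\mc{L}f(B_{s_j}^N)\,\Delta s_j\,-\,\sum_{j=0}^{k_t-1}h_N(B_{s_j}^N)\,\tfrac{1}{N^2}\,,
\]
where $s_j=j/N^2$ and $h_N:=\msf{L}_Nf_N-\mc{L}f_N$. By the continuous mapping theorem and the continuity of $\mc{L}f$, the Riemann sum converges to $\int_0^t\mc{L}f(B_s)\,\dd s$, so the whole task reduces to showing that the compensator $\sum_j h_N(B_{s_j}^N)/N^2$ vanishes in probability.

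The core computation is to feed the expansion~\eqref{eq:generatorEHBM} into the boundary condition $Af(0)+\tfrac{1}{2}f''(0)=0$ that defines $\mf{D}(\mc{L})$ for the exponential-holding Brownian motion. The term $-Af(0)$ at the origin cancels exactly against $\mc{L}f(0)$, so I expect
\[
h_N(y)\,=\,\of{N^2}\one_{[y>0]}\,+\,\Big(\tfrac{B}{2N^{\beta-1}}f'(0)+\of{N^\beta}\Big)\one_{[y=0]}\,.
\]
Plugging this into the Markov-inequality estimate that leads to~\eqref{eq:compensator3}, the bulk contribution is immediately $o(1)$. The boundary contribution collapses to a single object,
\[
|h_N(0)|\cdot\frac{1}{N^2}\cdot\sum_{j=0}^{k_t-1}\bb{P}(B_{s_j}^N=0)\,=\,O(N^{-(\beta-1)})\cdot\frac{1}{N^2}\cdot\Expec{\mf{L}_{k_t}^{X^N}(0)}{}\,,
\]
so the whole argument reduces to controlling the expected discrete local time at the origin.

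At this point, I would invoke Corollary~\ref{localTimeBound} to get $\Expec{\mf{L}_{k_t}^{X^N}(0)}{}=O(N^\gamma)$ with $\gamma=\min\{\alpha,\beta\}=\min\{2,\beta\}$, since here $\alpha=2$. The boundary contribution is then of order $N^{\gamma-\beta-1}$, which equals $N^{-1}$ in the regime $1<\beta\le 2$ and $N^{1-\beta}$ when $\beta>2$; in both sub-cases it tends to zero. Hence the compensator vanishes in probability, $\mc{M}_{k_t}^{N,f}\to\mc{M}_t^f$ follows along the lines of Lemma~\ref{lem:martingaleMBM}, and the martingale property of the limit is delivered by Lemma~\ref{lem:UnifMartingale}, whose uniform-integrability hypothesis $\theta=\min\{2,\alpha,\beta+1\}>1$ is met.

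The main obstacle, as in the sticky case, is the boundary term: the prefactor $B/(2N^{\beta-1})$ is not by itself $o(1/N^2)$, so the naive bound that works in the bulk fails at the origin. The genuine ingredient is the sharper local-time asymptotic of Section~\ref{sec:5}, which absorbs the $N^\gamma$-growth in the expected number of returns to zero. Without it one could not close the estimate uniformly over $\beta>1$, and in particular the delicate sub-regime $\beta\in(1,2]$ would not be reachable from tightness arguments alone.
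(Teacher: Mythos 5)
Your proposal is correct and follows the paper's overall template (decompose into Riemann sum plus compensator, compute $h_N$ from~\eqref{eq:generatorEHBM} using the boundary condition $Af(0)+\tfrac12 f''(0)=0$, kill the compensator, then invoke Lemma~\ref{lem:UnifMartingale}), but it diverges from the paper on the key boundary estimate. The paper does \emph{not} use the local-time bound here: it simply bounds $\sum_{j<k_t}\tfrac{1}{N^2}\bb{P}(B^N_{s_j}=0)\le k_t/N^2\lesssim t$, so the boundary contribution is $|h_N(0)|\cdot O(t)=O(N^{-(\beta-1)})$, which already vanishes, and the text explicitly reserves Corollary~\ref{localTimeBound} for the later cases (reflected, absorbed, elastic, killed). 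Your route through $\Expec{\mf{L}_{k_t}^{X^N}(0)}{}=O(N^{\gamma})$ with $\gamma=\min\{2,\beta\}$ is also valid and yields the sharper rate $N^{\gamma-\beta-1}$ (namely $N^{-1}$ for $1<\beta\le 2$), so it buys a better quantitative decay and cleanly covers the sub-regime $\beta\in(1,2]$ that the subsection's hypothesis $\beta>2$ nominally excludes. However, your closing claim that the local-time asymptotic is the ``genuine ingredient'' without which ``one could not close the estimate uniformly over $\beta>1$'' is overstated: since $|h_N(0)|=O(N^{-(\beta-1)})$ here (unlike the reflected/elastic/killed cases, where $h_N(0)$ contains an $O(1)$ term $\mc{L}f(0)$), the trivial occupation bound $\bb{P}(B^N_{s_j}=0)\le 1$ already gives $O(N^{1-\beta})\to 0$ for every $\beta>1$. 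So the local-time machinery is a refinement, not a necessity, in this particular case.
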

\begin{proof}
The proof follows the same steps as in Lemma~\ref{lem:martingaleMBM}. The generator~\eqref{eq:generatorEHBM} yields
\begin{equation*}
    h_N(y)\,=\,\of{1}\one_{[y > 0]}\,+\,\left(\pfrac{B}{2N^{\beta - 1}}f'(0)\,+\,\of{N^{-\beta}}\right)\one_{[y=0]}.
\end{equation*}
Thus, the probability of compensator of~\eqref{eq:compensador} is bounded by
\begin{equation*}
    \Prob{}{\left|\sum_{j=0}^{k_{t}-1}h_N(B_{s_j}^{N})\frac{1}{N^2}\right| > \eps} \,\leq\, \frac{1}{\eps}\left[ \of{1}\,+\,\of{N^{1 - \beta}}\right]\,,
\end{equation*}
Since $\beta > 2$, the right-hand side converges to $0$ as $N \to \infty$. Thus, the compensator converges to $0$ in probability, 
\begin{equation*}
    \sum_{j=0}^{k_{t}-1} \frac{h_{N}(B_{s_j}^{N})}{N^2} \to 0\,.
\end{equation*}
The convergence of the sequence of martingales $\mc{M}_{k_{t}}^{N,f}$ to the martingale $\mc{M}_t^f$ follows from the same arguments as in Lemma~\ref{lem:martingaleMBM} and Lemma~\ref{lem:UnifMartingale}.
\end{proof}

\subsection{The absorbed Brownian motion as a scaling limit of the $(\alpha,\beta,A,B)$-boundary random walk for $\alpha > 2$ and $\beta > 1$}

The absorbed Brownian motion is a Feller processes whose generator $\msf{L} = \pfrac{1}{2}\pfrac{d^2}{dx^2}$ has its domain described via
\begin{equation*}
	\mf{D}(\mc{L})\,=\,\left\{f\in\mc{C}_0^2(\bb{G}):f''\in\mc{C}_0([0,\infty),\,\text{ and }\,f''(0)\,=\,0)\right\}
\end{equation*}
In order to compare the discrete dynamics with the limiting continuous generator, fix an arbitrary test function $f \in \mf{D}(\mc{L})$. The equation~\eqref{eq:Brownian.generator} and the boundary conditions under $f$, we obtain, for $x\in\tilde{\bb{G}}_N\setminus\{\Delta\}$ that
\begin{equation}\label{eq:generatorABM}
	\msf{L}_N\gn{f}{x}{}\,=\,\left(\mc{L}\f{x}\,+\,\of{1}\right)\one_{[x>0]}\,+\,\left(\pfrac{B}{N^{\beta - 1}}f'(0)\,-\,\pfrac{A}{N^{\alpha - 2}}f(0)\,+\,\of{N^{-\beta}}\right)\one_{[x=0]}.
\end{equation}
and, for $\alpha > 2$ and $\beta > 1$, converges pointwise to $\mc{L}f$ for all $f \in \mf{D}(\mc{L})$ in the domain of the absorbed Brownian motion as $N \to \infty$.

\subsubsection{Characterization of limit points}
Recalling the martingale~\eqref{eq:martingale}, we now show that the limit points of the sequence $\{B_{t}^{N} : t \in [0,T]\}_{N \geq 1}$ satisfy the martingale problem associated with the generator $\mc{L}$ of the absorbed Brownian motion on the half-line.

\begin{lemma}\label{lem:characABM} 
	Let $f \in \mf{D}(\mc{L})$. Thus $\mc{M}_{k_{t}}^{N,f} \to \mc{M}_t^f$ as $N \to \infty$ in the Skorokhod space $\mc{D}([0,T], \bb{R})$. Moreover, $\mc{M}_t^f$ is a martingale with respect to the filtration generated by $\msf{W}$. 
\end{lemma}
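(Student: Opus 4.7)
The plan is to follow the blueprint of Lemma~\ref{lem:martingaleMBM}. Decompose
\[
\mc{M}_{k_t}^{N,f}\,=\,f(B_t^N)\,-\,f(B_0^N)\,-\,\sum_{j=0}^{k_t-1}\hat{\msf{L}}_N f_N(X_j^N),
\]
and rewrite the compensator, via~\eqref{eqdef:generatorXB} and the identification $s_j = j/N^2$, as a Riemann sum for $\int_0^t\mc{L}f(B_s)\dd s$ plus a remainder $\sum_{j=0}^{k_t-1}h_N(B_{s_j}^N)/N^2$, where $h_N(y):=\msf{L}_Nf_N(y)-\mc{L}f(y)$. The two function evaluations converge by the mapping theorem applied to any weakly convergent subsequence of $B^N$ provided by Lemma~\ref{lem:tight}, and the Riemann sum converges to the integral by continuity of $\mc{L}f$. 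So the whole argument reduces to showing that the remainder tends to zero in probability.

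To identify $h_N$, I would plug in the expansion~\eqref{eq:generatorABM} and use that the absorbed boundary condition $f''(0)=0$ defining $\mf{D}(\mc{L})$ forces $\mc{L}f(0)=0$. This gives $h_N(y)=\of{N^2}$ for $y>0$ and
\[
h_N(0)\,=\,\frac{B}{N^{\beta-1}}f'(0)\,-\,\frac{A}{N^{\alpha-2}}f(0)\,+\,\of{N^\beta}.
\]
Splitting the remainder according to whether $B_{s_j}^N$ vanishes and applying Markov's inequality, as in Lemma~\ref{lem:characRBM}, reduces matters to controlling
\[
\of{N^2}\,+\,|h_N(0)|\,\frac{\Expec{\mf{L}_{k_t}^{X^N}(0)}{}}{N^2}.
\]
This is precisely the slot where the local-time estimate of Corollary~\ref{localTimeBound}, in the rescaled form~\eqref{eq:localTimeBound2}, enters: $\Expec{\mf{L}_{k_t}^{X^N}(0)}{}\leq O(N^\gamma)$ with $\gamma=\min(\alpha,\beta)$.

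The delicate point is balancing the decay $|h_N(0)|\lesssim N^{-(\beta-1)}+N^{-(\alpha-2)}$ against the growth $N^\gamma$ of the expected local time. The product is bounded by $N^{\gamma-\beta-1}+N^{\gamma-\alpha}$: the first exponent is at most $-1$, but the second degenerates to $0$ in the regime $\alpha\leq\beta$. I would circumvent this by treating the two summands of $h_N(0)$ separately; for the $A/N^{\alpha-2}$ piece it suffices to use the trivial deterministic bound $\mf{L}_{k_t}^{X^N}(0)\leq k_t = O(N^2)$, producing a factor $N^{-(\alpha-2)}\to 0$ since $\alpha>2$, while for the $B/N^{\beta-1}$ piece Corollary~\ref{localTimeBound} is already sharp enough because $\gamma-\beta-1\leq -1$. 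This dichotomy is the main obstacle, since the generic bound of Corollary~\ref{localTimeBound} is not sharp when $\gamma\geq 2$. Once the remainder is controlled, the convergence $\mc{M}_{k_t}^{N,f}\to \mc{M}_t^f$ in $\mc{D}([0,T],\bb{R})$ follows, and the martingale property of $\mc{M}_t^f$ is a direct consequence of Lemma~\ref{lem:UnifMartingale}.
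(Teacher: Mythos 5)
Your proposal is correct and follows essentially the same route as the paper: the same martingale decomposition, the same identification of $h_N$ from~\eqref{eq:generatorABM} using $f''(0)=0$, and Markov's inequality on the compensator. The only difference is that the paper dispenses with the local-time estimate entirely in this case, bounding $\sum_{j}N^{-2}\,\mathbb{P}(B^N_{s_j}=0)\le k_t/N^2\lesssim t$ and using only that $|h_N(0)|\lesssim N^{-(\beta-1)}+N^{-(\alpha-2)}\to 0$ when $\alpha>2$ and $\beta>1$ --- which is exactly the ``trivial deterministic bound'' you invoke for the $A$-piece, applied to both pieces at once, so the dichotomy you construct to rescue Corollary~\ref{localTimeBound} in the regime $\alpha\le\beta$ is sound but unnecessary.
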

\begin{proof}
	The proof follows the same steps as in Lemma~\ref{lem:martingaleMBM}. However, in that case, for any $f\in\mf{D}(\mc{L})$, the domain of the generator for the absorbed Brownian motion, the discrete generator~\eqref{eq:generatorABM} yields
	\begin{equation}\label{eq:generatorABM2}
		\begin{split}
			h_N(x)\,&=\,\of{1}\one_{[x>0]}\,+\,\left[\frac{B}{N^{\beta - 1}}f'(0)\,-\,\frac{A}{N^{\alpha - 2}}f(0)\,+\,\of{N^{-\beta}}\right]\one_{[x=0]}.
		\end{split}
	\end{equation}
	In this case, from~\eqref{eq:generatorABM2} we obtain the following bound for the probability of compensator~\eqref{eq:compensator3} as follows 
	\begin{equation*}
		\begin{split}
			&\Prob{}{\left|\sum_{j=0}^{k_{t}-1}h_N(B_{s_j}^{N})\frac{1}{N^2}\right| > \eps}\,\leq\,\frac{1}{\eps}\left[\of{1}\,+\,\of{N^{-\theta}}\right]
		\end{split}
	\end{equation*}
	where $\theta := \min\{\beta - 1, \alpha - 2\}$. Since $\alpha > 2$ and $\beta > 1$, the right-hand side converges to $0$ as $N \to \infty$. Thus, we have that
	\begin{equation*}
		\sum_{j=0}^{k_{t}-1} \frac{h_{N}(B_{s_j}^{N})}{N^2}\,\to\,0\,,
	\end{equation*}
	in probability. Thus the convergence of $\mc{M}_{k_{t}}^{N,f}$ to the martingale $\mc{M}_t^f$ holds.
\end{proof}

The proof for the next case follows the same approach as in Lemma~\ref{lem:martingaleMBM}, with the key difference being the need to control the local time in order to obtain the required upper bound on the compensator.

This control is achieved by invoking the upper bound established in Section~\ref{sec:5}, via Corollary~\ref{localTimeBound}, which plays a crucial role in handling the boundary behavior of the process. Thanks to this additional estimate, we can carry out the argument in essentially the same way, while ensuring that the contribution from the local time term remains uniformly bounded.

It is worth noting that the bound in Corollary~\ref{localTimeBound} was originally derived for the random walk $X^N$, defined via its generator as in~\eqref{eq:boundaryRWgen}. In our setting, however, we require a corresponding estimate for the rescaled random walk $B^N$. Fortunately, due to the scaling relation between these two processes, the local time at the origin for the rescaled boundary random walk $B^N$ satisfies an analogous bound to the one in Corollary~\ref{eq:localTimeBound}, 
\begin{corollary}\label{eq:localTimeBound2}
    The expectation of the local time at the origin $\mf{L}_N^{B^N}(0)$ of the rescaled boundary random walk $B^N$ has asymptotic behavior given by
\begin{equation}
    \Expec{\mf{L}_N^{B^N}(0)}{}{}\,\lesssim\,O((tN^2)^{1/2}N^\beta)\,.
\end{equation}
for all $t \in [0,T]$.
\end{corollary}

\subsection{The reflected Brownian motion as a scaling limit of the $(\alpha,\beta,A,B)$-random walk with parameters $\alpha > 1 + \beta$ and $\beta \in [0,1)$}
The reflected Brownian motion is a Feller diffusion process on the half-line, characterized by the generator $\mc{L}$ and the domain $\mf{D}(\mc{L})$ given by
\begin{equation*}
    \mf{D}(\mc{L})\,=\,\left\{f\in\mc{C}_0^2(\bb{G}):f''\in\mc{C}_0([0,\infty)),\,\text{ and }\,f'(0)\,=\,0\right\}
\end{equation*}
In order to compare the discrete dynamics with the limiting continuous generator, fix an arbitrary test function $f \in \mf{D}(\mc{L})$. From the generator of boundary random walk~\eqref{eq:Brownian.generator} and choosing $\beta \in [0,1)$ and $\alpha > 1 + \beta$ in addition to the boundary conditions over $f$, we obtain, for $x \in \tilde{\bb{G}}_N\setminus\{\Delta\}$ that
\begin{equation}\label{eq:generatorRBM}
\begin{split}
\msf{L}_N\gn{f}{x}{}\,&=\,\Big[\mc{L}\gn{f}{x}{}\,+\,\of{1}\Big]\one_{[x>0]}\,+\,\Big[-\,\pfrac{A}{N^{\alpha - 2}}f(0)\,+\,\of{N^{-\beta}}\Big]\one_{[x=0]}
\end{split}
\end{equation}

\subsubsection{Characterization of limit points}
\begin{lemma}\label{lem:characRBM}
Let $f \in \mf{D}(\mc{L})$. Thus $\mc{M}_{k_{t}}^{N,f} \to \mc{M}_t^f$ as $N \to \infty$ in the Skorokhod space $\mc{D}([0,T], \bb{R})$. Moreover, $\mc{M}_t^f$ is a martingale with respect to the filtration generated by $\msf{W}$.
\end{lemma}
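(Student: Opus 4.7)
The plan is to follow the template of Lemma~\ref{lem:martingaleMBM}: reduce the statement to showing that the compensator vanishes in probability, after which Lemma~\ref{lem:UnifMartingale} will supply the martingale property of the limit. Concretely, as in~\eqref{eq:compensador}, one writes
\begin{equation*}
\mc{M}_{k_t}^{N, f} \,=\, f(B_t^N) - f(B_0^N) - \sum_{j=0}^{k_t-1} \mc{L}f(B_{s_j}^N)\,\Delta s_j - \sum_{j=0}^{k_t-1} h_N(B_{s_j}^N)\, N^{-2},
\end{equation*}
with $h_N := \msf{L}_N f - \mc{L}f$. The Riemann sum in the middle converges to $\int_0^t \mc{L}f(B_s)\,ds$ exactly as before; only the last term requires new input.

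The first step is to read off $h_N$ from~\eqref{eq:generatorRBM}. On the interior, $h_N(y) = \of{N^2}$ as in the previous cases. At the origin, the domain condition $f'(0)=0$ eliminates the $BN^{1-\beta}f'(0)$ term, but $\mc{L}f(0)=\tfrac{1}{2}f''(0)$ is \emph{not} absorbed by the discrete boundary generator, so
\begin{equation*}
h_N(0) \,=\, -\,\frac{A}{N^{\alpha-2}}\,f(0) \,-\, \tfrac{1}{2}f''(0) \,+\, o(N^{-\beta}),
\end{equation*}
which does not tend to zero — and when $\alpha<2$ the first summand even diverges. This is the main obstacle and is precisely where the proof must part company with the mixed Brownian motion argument.

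The remedy is to control how much of the sum lives on $\{B_{s_j}^N = 0\}$, using the rescaled local time bound~\eqref{eq:localTimeBound2}. Since $\alpha > 1+\beta > \beta$, we have $\gamma := \min\{\alpha,\beta\} = \beta$, hence
\begin{equation*}
N^{-2}\,\E\bigl[\,\mf{L}_{k_t}^{X^N}(0)\,\bigr] \,\lesssim\, N^{\beta-2},
\end{equation*}
which is $o(1)$ because $\beta<1$. Combining this with $|h_N(0)| \lesssim 1 + N^{2-\alpha}$ bounds the boundary contribution to the compensator in expectation by $N^{\beta-2} + N^{\beta-\alpha}$, and both exponents are strictly negative thanks to the hypothesis $\alpha > 1+\beta$. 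A Markov inequality then delivers convergence of the full compensator to zero in probability, and the rest of the argument — the Riemann-sum convergence and the passage to the martingale limit via the uniform integrability supplied by Lemma~\ref{lem:UnifMartingale} — is inherited verbatim from Lemma~\ref{lem:martingaleMBM}.
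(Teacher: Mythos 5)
Your proposal is correct and follows essentially the same route as the paper: the same decomposition into Riemann sum plus compensator, the same reading of $h_N$ from~\eqref{eq:generatorRBM} (with $f'(0)=0$ killing the gradient term but leaving the non-vanishing $-\tfrac12 f''(0)$ and the possibly divergent $AN^{2-\alpha}f(0)$ at the origin), and the same use of the local time bound~\eqref{eq:localTimeBound2} with $\gamma=\beta$ to show the boundary contribution is $O(N^{\beta-2}+N^{\beta-\alpha})=o(1)$. If anything, your explicit tracking of the $\mc{L}f(0)$ term is slightly more careful than the paper's final display, which silently drops it.
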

\begin{proof}
Let $f\in\mf{D}(\mc{L})$ fixed on the domain of the reflected Brownian motion. From~\eqref{eq:generatorRBM} we have that
\begin{equation}\label{eq:generatorRBM2}
    h_N(y)\,=\,\of{1}\one_{[y > 0]}\,+\,\Big[-\,\pfrac{A}{N^{\alpha - 2}}f(0)\,-\,\mc{L}\f{0}\,+\,\of{N^{-\beta}}\Big]\one_{[y = 0]}.
\end{equation}
Thus, from Corollary~\ref{eq:localTimeBound2} and \eqref{eq:generatorRBM2} the probability of compensator~\eqref{eq:compensador}, in this case, is bounded by
\begin{equation*}
\begin{split}
&\Prob{}{\left|\sum_{j=0}^{k_{t}-1}h_{N}(B_{s_j}^{N})\frac{1}{N^2}\right| > \eps}\leq\,\frac{1}{\eps}\left(\of{1}\,+\,\pfrac{1}{N^2}h_N(0)\sum_{i=0}^{k_t - 1}\Prob{}{B_i^N=0}{}\right)\\
&\qquad\qquad\leq\,\frac{1}{\eps}\left(\of{1}\,+\,\pfrac{1}{N^2}\Big[\pfrac{A}{N^{\alpha - 2}}f(0)\,+\,\mc{L}\gn{f}{0}{}\,-\,\of{N^{-\beta}}\Big]\sum_{i=0}^{k_t - 1}\Prob{}{B_i^N=0}{}\right)\\
&\qquad\qquad\lesssim\,\frac{1}{\eps}\left[\of{1}\,+\,\frac{N^{\beta + 1}}{N^2}\Big[\pfrac{A}{N^{\alpha - 2}}f(0)\,-\,\of{N^{-\beta}}\Big]\right]\\
&\qquad\qquad=\,\frac{1}{\eps}\left[\of{1}\,+\,\of{N^{\beta + 1 - \alpha}}\,+\,\of{N^{1 - \beta}}\right]\,.
\end{split}
\end{equation*}
and, in that case, since $\alpha > 1 + \beta$ and $0 \leq \beta < 1$, the right-hand side converges to $0$ as $N \to \infty$. Thus, we have that
\begin{equation*}
    \sum_{j=0}^{k_{t}-1} \frac{h_{N}(B_{s_j}^{N})}{N^2} \to 0
\end{equation*}
in probability. Once again, $\mc{M}_{k_{t}}^{N,f}$ converges to the martingale $\mc{M}_t^f$ by the same arguments as in Lemma~\ref{lem:martingaleMBM} and Lemma~\ref{lem:UnifMartingale}.
\end{proof}

\subsection{The elastic Brownian motion as a scaling limit of the $(1 + \beta,\beta,A,B)$-random walk with parameter $\beta \in [0,1)$}
The elastic Brownian motion is a Feller diffusion process on the half-line, characterized by the generator $\mc{L} = \pfrac{1}{2}\pfrac{d^2}{dx^2}$ whose domain is given as follows:
\begin{equation*}
    \mf{D}(\mc{L})\,=\,\left\{f\in\mc{C}_0^2(\bb{G}):f''\in\mc{C}_0([0,\infty)),\,\text{ and }\,Af(0)\,-\,Bf'(0)\,=\,0\right\}
\end{equation*}
In order to compare the discrete dynamics with the limiting continuous generator, fix an arbitrary test function $f \in \mf{D}(\mc{L})$. From the generator of boundary random walk~\eqref{eq:Brownian.generator}, for $\alpha = 1 + \beta$ we obtain, for $x \in \tilde{\bb{G}}_N\setminus\{\Delta\}$ the following
\begin{equation}\label{eq:generatorEBM}
\begin{split}
&\msf{L}_N\gn{f}{x}{}\,=\,\left(\pfrac{1}{2}f''(x)\,+\,\of{1}\right)\one_{[x>0]}\,+\,\left(\pfrac{B}{2N^{\beta - 1}}f'(0)\,-\,\pfrac{A}{N^{\beta - 1}}f(0)\,+\,\of{N^{-\beta}}\right)\one_{[x=0]}\\
&\quad=\,\left(\mc{L}\gn{f}{x}{}\,+\,\of{1}\right)\one_{[x>0]}\,+\,\left(\pfrac{1}{N^{1+\beta}}\left[Bf'(0)\,-\,Af(0)\right]\,+\,\of{N^{-\beta}}\right)\one_{[x=0]}\\
&\quad=\,\left(\mc{L}\gn{f}{x}{}\,+\,\of{1}\right)\one_{[x>0]}\,+\,\of{N^{-\beta}}\one_{[x=0]}.
\end{split}
\end{equation}

\subsubsection{Characterization of limit points}
\begin{lemma}\label{lem:characEBM}
Let $f \in \mf{D}(\mc{L})$. Thus $\mc{M}_{k_{t}}^{N,f} \to \mc{M}_t^f$ as $N \to \infty$ in the Skorokhod space $\mc{D}([0,T], \bb{R})$. Moreover, $\mc{M}_t^f$ is a martingale with respect to the filtration generated by $\msf{W}$.
\end{lemma}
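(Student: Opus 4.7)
The plan is to follow the same template developed in Lemmas~\ref{lem:characRBM} and~\ref{lem:characABM}, since the elastic case also requires invoking the local time bound~\eqref{eq:localTimeBound2} at the origin. Thus, I would reduce the problem to showing that the compensator
\[
\sum_{j=0}^{k_t-1} h_N(B_{s_j}^N)\,\frac{1}{N^2}, \qquad h_N(y):=\msf{L}_N f_N(y)-\mc{L}f(y),
\]
tends to zero in probability. Given this, the convergence $\mc{M}_{k_t}^{N,f}\to \mc{M}_t^f$ follows from the mapping theorem exactly as in Lemma~\ref{lem:martingaleMBM}, and the martingale property of the limit is supplied by Lemma~\ref{lem:UnifMartingale}.

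The first step is to read off $h_N$ from~\eqref{eq:generatorEBM}. The crucial observation here, which is specific to the elastic regime $\alpha=1+\beta$, is that the boundary condition $Af(0)-Bf'(0)=0$ defining $\mf{D}(\mc{L})$ causes the potentially divergent $1/N^{1+\beta}$ coefficient at the origin to vanish identically. Consequently,
\[
h_N(y)\,=\,\of{N^2}\,\one_{[y>0]}\,+\,\bigl(\of{N^\beta}\,-\,\mc{L}f(0)\bigr)\one_{[y=0]}.
\]

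Next, I would split the compensator into its interior part and its boundary part and apply Markov's inequality as in~\eqref{eq:compensator3}. The interior sum is straightforward: $\sum_{j=0}^{k_t-1}\frac{1}{N^2}\sum_{y\in\tilde{\bb{G}}_N\setminus\{0,\Delta\}}|h_N(y)|\Prob{}{B_{s_j}^N=y}$ is bounded by $\of{N^2}\cdot t$, which vanishes. For the boundary contribution, the sum over $j$ of $\frac{1}{N^2}\Prob{}{B_{s_j}^N=0}$ equals $\frac{1}{N^2}\Expec{\mf{L}_{k_t}^{X^N}(0)}{}{}$, and invoking Corollary~\ref{localTimeBound} with $\gamma=\min\{\alpha,\beta\}=\beta$ in the elastic regime yields a bound of order $N^{\beta-2}$. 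Since $\beta\in[0,1)$, this term tends to $0$. The small remainder $\of{N^\beta}$ at the origin produces a harmless $\of{N^2}$ contribution after the same local-time estimate. All together,
\[
\Prob{}{\left|\sum_{j=0}^{k_t-1} h_N(B_{s_j}^N)\,\frac{1}{N^2}\right|>\eps}\,\lesssim\,\frac{1}{\eps}\bigl[\of{N^2}\,+\,O(N^{\beta-2})\bigr]\,\to\,0.
\]

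The main obstacle is precisely the identification of the cancellation at the origin: if one were not careful about applying the boundary condition $Af(0)-Bf'(0)=0$, the term $\frac{1}{N^{1+\beta}}[Bf'(0)-Af(0)]$ in $\msf{L}_Nf_N(0)$ would be of order $N^{-1-\beta}$, which, when multiplied by the expected local time of order $N^\beta$, would produce an $O(N^{-1})$ contribution that, while still vanishing, would require a more delicate argument to handle uniformly. The elegance of the elastic boundary condition is that it kills this term exactly, reducing the boundary contribution to the benign constant term $\mc{L}f(0)$ which is readily absorbed by the surplus factor $N^{\beta-2}$ coming from the local-time estimate. With the compensator shown to vanish in probability, the proof concludes as in Lemma~\ref{lem:martingaleMBM}, and the uniform integrability supplied by Lemma~\ref{lem:UnifMartingale} upgrades the limit $\mc{M}_t^f$ to a genuine martingale with respect to the filtration generated by $\msf{W}$.
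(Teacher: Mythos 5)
Your proposal is correct and follows essentially the same route as the paper: it reads off $h_N$ from~\eqref{eq:generatorEBM}, exploits the cancellation $Bf'(0)-Af(0)=0$ forced by the elastic boundary condition, and controls the remaining $O(1)$ boundary term $-\mc{L}f(0)$ via the local-time estimate with $\gamma=\beta<2$ before concluding through Lemmas~\ref{lem:martingaleMBM} and~\ref{lem:UnifMartingale}. Your write-up is in fact somewhat more explicit than the paper's about where the cancellation happens and how the local-time bound enters, but the argument is the same.
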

\begin{proof}
From the boundary conditions of the limit process, the discrete generator can be rewritten as in~\eqref{eq:generatorEBM}, and therefore it yields
\begin{equation*}
    h_N(x)\,=\,\of{1}\one_{[x > 0]}\,-\,\left(\mc{L}\gn{f}{0}{N}\,-\,\of{N^{-\beta}}\right)\one_{[x=0]}.
\end{equation*}
Thus, the probability of compensator of~\eqref{eq:compensador} is bounded by
\begin{equation*}
\begin{split}
\Prob{}{\left|\sum_{j=0}^{k_{t}-1}h_{N}(B_{s_j}^{N})\frac{1}{N^2}\right| > \eps} \,&\frac{1}{\eps}\left[\of{1}\,+\,\pfrac{1}{N^2}h_N(0)\sum_{i=0}^{k_t - 1}\Prob{}{B_i^N=0}{}\right]\\
&\leq\, \frac{1}{\eps}\left[ \of{1}\,-\,\frac{N^{1 + \beta}}{N^2}\left(\,\mc{L}\f{0}\,+\,\of{N^{-\beta}}\right)\right]\,,
\end{split}
\end{equation*}
and again, since $\beta \in [0,1)$, we have that
\begin{equation*}
    \sum_{j=0}^{k_{t}-1} \frac{h_{N}(B_{s_j}^{N})}{N^2} \to 0
\end{equation*}
in probability. Thus, the convergence of the sequence of martingales $\mc{M}_{k_{t}}^{N,f}$ to the martingale $\mc{M}_t^f$ holds.
\end{proof}

\subsection{The killed Brownian motion as a scaling limit of the $(\alpha,\beta,A,B)$-random walk with parameters $0 \leq \beta < 1$ and $\alpha < 1 + \beta$}
The killed Brownian motion is a Feller diffusion process on the half-line, characterized by the generator $\mc{L}$ and the domain $\mf{D}(\mc{L})$ as follows:
\begin{equation*}
    \mf{D}(\mc{L})\,=\,\left\{f\in\mc{C}_0^2(\bb{G}):f''\in\mc{C}_0([0,\infty)),\,\text{ and }\,f(0)\,=\,0\right\}
\end{equation*}
In order to compare the discrete dynamics with the limiting continuous generator, fix an arbitrary test function $f \in \mf{D}(\mc{L})$. In that case, by using~\eqref{eq:Brownian.generator}, we obtain, for $x \in \tilde{\bb{G}}_N\setminus\{\Delta\}$ that
\begin{equation}\label{eq:generatorKBM}
    \msf{L}_N\gn{f}{x}{}\,=\,\Big[\pfrac{1}{2}\gn{f''}{x}{}\,+\,\of{1}\Big]\one_{[x>0]}\,+\,\Big[\pfrac{B}{N^{\beta - 1}}f'(0)\,+\,\of{1}\Big]\one_{[x=0]}.
\end{equation}

\subsubsection{Characterization of limit points}
\begin{lemma}\label{lem:characKBM} 
Let $f \in \mf{D}(\mc{L})$. Thus $\mc{M}_{k_{t}}^{N,f} \to \mc{M}_t^f$ as $N \to \infty$ in the Skorokhod space $\mc{D}([0,T], \bb{R})$. Moreover, $\mc{M}_t^f$ is a martingale with respect to the filtration generated by $\msf{W}$.
\end{lemma}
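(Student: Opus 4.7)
The plan is to mirror the strategy of Lemmas~\ref{lem:martingaleMBM} and~\ref{lem:characRBM}, reducing the argument to showing that the compensator in the decomposition~\eqref{eq:compensador} converges to zero in probability; the martingale property for $\mc{M}_t^f$ then follows from Lemma~\ref{lem:UnifMartingale}. Since Lemma~\ref{lem:tight} already provides a subsequential limit point $B$ of $B^N$ and $\mc{L}f$ is continuous, the Riemann-sum piece in~\eqref{eq:compensador} converges to $\int_0^t \mc{L}\f{B_s}\,\dd s$ automatically, so only the compensator requires a new estimate.

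The first step is to identify the error function $h_N(y) := \msf{L}_N\gn{f}{y}{N} - \mc{L}\f{y}$ using~\eqref{eq:generatorKBM}. Since $f(0)=0$ forces $\mc{L}\f{0} = \tfrac{1}{2}f''(0)$, one finds $h_N(y) = \of{N^2}$ for $y > 0$ and
\[
h_N(0)\,=\,\frac{B}{N^{\beta-1}}f'(0)\,-\,\frac{1}{2}f''(0)\,+\,o\!\left(\frac{1}{N^{\beta+2}}\right),
\]
so that $|h_N(0)| \lesssim N^{1-\beta} + 1$ uniformly in $N$. The interior contribution to the compensator is then handled exactly as in~\eqref{eq:compensator3}, producing an $o(1)$ bound via Markov's inequality together with the trivial estimate $k_t/N^2 \leq T$.

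The boundary contribution equals $N^{-2} h_N(0)\,\mf{L}^{B^N}_{k_t}(0)$, and combining Markov's inequality with the local time estimate~\eqref{eq:localTimeBound2}, writing $\gamma := \min\{\alpha,\beta\}$, I would obtain
\[
\Prob{}{\Big|\sum_{j=0}^{k_t-1}h_N(0)\one_{[B_{s_j}^N=0]}\frac{1}{N^2}\Big| > \eps}\,\lesssim\,\frac{1}{\eps}\left(N^{\gamma-\beta-1}\,+\,N^{\gamma-2}\right).
\]
The key verification is that both exponents are strictly negative in each of the two regimes. In the first regime, $\beta \in [0,1)$ with $\alpha < \beta+1$, so $\gamma \leq \min\{\alpha,\beta\} < \beta+1$ and also $\gamma \leq \beta < 1 < 2$. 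In the second regime, $\beta \geq 1$ with $\alpha < 2$, so $\gamma \leq \alpha < 2 \leq \beta+1$.

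The main obstacle is conceptual rather than computational: the term $-\tfrac{1}{2}f''(0)$ in $h_N(0)$ does not decay in $N$, so the collapse of the boundary compensator hinges entirely on the smallness of the rescaled local time $N^{-2}\mf{L}^{B^N}_{k_t}(0)$. This is exactly the role played by Corollary~\ref{localTimeBound} and its rescaled form~\eqref{eq:localTimeBound2}, and the parameter restrictions in the statement are precisely those that force $\gamma < \beta+1$. Once the compensator vanishes in probability, Lemma~\ref{lem:UnifMartingale} transfers the martingale property to $\mc{M}_t^f$ and concludes the proof.
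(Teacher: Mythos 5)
Your proposal is correct and follows essentially the same route as the paper: identify $h_N$ from~\eqref{eq:generatorKBM}, control the interior part as in~\eqref{eq:compensator3}, bound the boundary part by $N^{-2}|h_N(0)|$ times the expected local time via~\eqref{eq:localTimeBound2}, and check that the resulting exponents are negative in both parameter regimes before invoking Lemma~\ref{lem:UnifMartingale}. If anything, your exponent verification is the more careful one — you check both $\gamma-\beta-1<0$ and $\gamma-2<0$ separately in each regime, whereas the paper's case split on $\beta<2$ versus $\beta\geq 2$ is garbled (it should distinguish the regimes $\beta\in[0,1)$, $\alpha<1+\beta$ and $\beta\geq 1$, $\alpha<2$, exactly as you do).
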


\begin{proof}
The proof follows the same steps as in Lemma~\ref{lem:characRBM}. The generator~\eqref{eq:generatorKBM} yields
\begin{equation*}
    h_N(x)\,=\,\of{1}\one_{[x>0]}\,+\,\Big[\pfrac{B}{N^{\beta - 1}}f'(0)\,-\,\mc{L}\f{0}\,+\of{1}\Big]\one_{[x=0]}.
\end{equation*}
Thus, the probability of compensator of~\eqref{eq:compensador} is bounded by
\begin{equation*}
\begin{split}
&\Prob{}{\left|\sum_{j=0}^{k_{t}-1}h_{N}(B_{s_j}^{N})\frac{1}{N^2}\right| > \eps}\,\leq\,\frac{1}{\eps}\left[ \of{1}\,+\,\frac{N^{1 + \beta}}{N^2}\Big[\pfrac{B}{N^{\beta - 1}}f'(0)\,-\,\mc{L}\f{0}\,+\of{1}\Big]\right]\,,
\end{split}
\end{equation*}
Finally, for $\beta \in [0,1)$, the right-hand side converges to $0$ as $N \to \infty$. 

Thus, we have that
\begin{equation*}
    \sum_{j=0}^{k_{t}-1} \frac{h_{N}(B_{s_j}^{N})}{N^2} \to 0
\end{equation*}
in probability. Thus holds the martingale convergence $\mc{M}_{k_{t}}^{N,f} \to \mc{M}_t^f$.

It is noteworthy that, by choosing $f\in\mf{D}(\mc{L}^k)$, as the authors in~\cite{Erhard2024} have done, it is possible to obtain the convergence, in probability, under the regime $\alpha < 2$ and $\beta \geq 1$.
\end{proof}

\begin{remark}\rm
    Although the generators do not converge pointwise in all cases, Erhard et al.~\cite{Erhard2024} showed that, under a careful choice of {\it correction operators}, it is possible to obtain uniform convergence of generators in each case, and consequently also pointwise convergence.
\end{remark}

\subsection{Well-posedness of the Martingale Problem for the general Brownian motion}

Once tightness has been established (Lemma~\ref{lem:tight}) and the convergence of the associated martingales has been demonstrated throughout this section, we conclude that the scaling limit solves the martingale problem associated with the generator~$\mc{L}$, as described in Theorem~\ref{thm:Fellerprocess}, corresponding to a choice of coefficients $c_i \neq 0$ for $i \in \{1,2,3\}$. The resulting process is a Feller process~\cite[Theorem 1.3, page 3]{KosVadPot10}, and hence the martingale problem is well-posed (see~\cite[Corollary 6.2.5, page 151]{StroockVaradham} or~\cite[Proposition 2.4, page 283]{vanCasteren92}): it admits a unique solution, which defines the general Brownian motion on the half-line. Therefore, the convergence in distribution of the rescaled walks to this limiting diffusion is fully characterized.

\section*{Acknowledgements}
E.B. would like to thank CAPES for the support through a Master's degree scholarship. 
A.J.C. thanks CAPES for the support through a Ph.D. scholarship. 
The authors would also like to thank Ying Li (Xiangtan University, China) for kindly pointing out corrections and typographical errors in a previous version of this work.
\bibliographystyle{apalike}
\bibliography{slBRW}

\end{document}